\newtheorem{thm}{\protect\theoremname}[section]
  \theoremstyle{definition}
  \newtheorem{defn}[thm]{\protect\definitionname}
  \theoremstyle{remark}
  \newtheorem{rem}[thm]{\protect\remarkname}
  \theoremstyle{plain}
  \newtheorem{prop}[thm]{\protect\propositionname}
  \theoremstyle{plain}
  \newtheorem{lem}[thm]{\protect\lemmaname}
  \theoremstyle{plain}
  \newtheorem{cor}[thm]{\protect\corollaryname}
  \providecommand{\definitionname}{Definition}
  \providecommand{\lemmaname}{Lemma}
  \providecommand{\propositionname}{Proposition}
  \providecommand{\remarkname}{Remark}
\providecommand{\theoremname}{Theorem}
\providecommand{\corollaryname}{Corollary}
\newcommand{\const}{\mathrm{const\,}}
\newcommand{\vol}{\mathrm{vol}}
\newcommand{\loc}{\mathrm{loc}}
\begin{document}

\title[The~Sobolev--Poincar\'e Inequality and the~$L_{q,p}$-Cohomology]
{The~Sobolev--Poincar\'e Inequality and \\
the~$L_{q,p}$-Cohomology of~Twisted Cylinders}


\author{Vladimir Gol$'$dshtein}
\address{Department of Mathematics,
Ben Gurion University of the Negev,
P.O.Box 653, Beer Sheva, Israel} 
\email{vladimir@bgumail.bgu.ac.il}

\author{Yaroslav Kopylov}
\address{Sobolev Institute of Mathematics, Pr.~Akad. Koptyuga 4,
630090, Novosibirsk, Russia  \\
Novosibirsk State University, ul.~Pirogova~1,
630090, Novosibirsk, Russia}
\email{yakop@math.nsc.ru}

\thanks{The second author was supported by the Program of Basic Scientific Research 
of the Siberian Branch of the Russian Academy of Sciences.}

\begin{abstract}
We establish a~vanishing result for the~$L_{q,p}$-cohomology ($q\ge p$) of~a~twisted 
cylinder, which is a~generalization of~a~warped cylinder. The~result is new even 
for warped~cylinders. We base on~the~methods for proving 
the~$(p,q)$~Sobolev--Poincar\'e inequality developed by~L.~Shartser. 

\textit{Mathematics Subject Classification.} 58A12, 46E30, 55N05.

\textit{Key words and phrases}: differential form, $L_{q,p}$-cohomology,
twisted cylinder, homotopy operator 
\end{abstract}
\maketitle

\section{Introduction}

The \textit{$L_{q,p}$-cohomology} $H_{q,p}^{k}(M)$ of a Riemannian manifold $(M,g)$ is, 
by~definition, the quotient of the space of closed $p$-integrable differential $k$-forms 
by the exterior differentials of $q$-integrable $k$-forms. 
If $p=q$ then $L_{q,p}$-cohomology is usually referred to simply as $L_p$-cohomology 
and the index~$p$ is used instead of~$p,p$ in all the notations. 

A \textit{twisted product} $X\times_{h}Y$ of two Riemannian manifolds 
$(X,g_{X})$ and $(Y,g_{Y})$ is the direct product manifold $X\times_{g}Y$ endowed
with a~Riemannian metric of the form 
\begin{equation}
g:=g_{X}+h^{2}(x,y)g_{Y},\label{tp-metric}
\end{equation}
where $h:X\times Y\to\mathbb{R}$ is a~smooth positive function (see
\cite{Chen81}). If $X$ is a~half-interval $[a,b)$ then the twisted product $X\times_{h}Y$ 
is called a~{\it twisted cylinder}.
                                                                                                    
We refer to an $m$-dimensional Riemannian manifold $(M,g_M)$ as 
an~\textit{asymptotic twisted product} (respectively, as an {\it asymptotic twisted cylinder})
if, outside an $m$-dimensional compact submanifold,
it is bi-Lipschitz equivalent to a~twisted product (respectively, to a~twisted cylinder). 

In this paper, we prove some vanishing results for the $L_{q,p}$-cohomology
of twisted cylinders $[a,b)\times_{h}N$ for a~positive smooth function
$h:[a,b)\times N\to\mathbb{R}$ in the case where the base $N$ is a closed manifold and 
$p\ge q>1$, $\frac{1}{p}-\frac{1}{q}<\frac{q-1}{q(\dim N+1)}$. 

If in~(\ref{tp-metric}) the function~$h$ depends only on~$x$ then we obtain the familiar notion 
of a~\textit{warped product} (see \cite{BiON}). Twisted products were the object of~recent 
investigations~\cite{BDDO2012,DBZ2012,Fa2013,FGKU2001,GKop16,KJKP2005,PoRe93}.
The $L_{q,p}$-cohomology of warped cylinders $[a,b)\times_{h}N$,
i.e., of product manifolds $[a,b)\times N$ endowed with a warped product
metric 
$$
g=dt^{2}+h^{2}(t)g_{N},
$$
where $g_{N}$ is the Riemannian metric of~$N$ and $h:[a,b)\to\mathbb{R}$
is a positive smooth function, was studied by Gol$'$dshtein, Kuz$'$minov,
and Shvedov \cite{GKSh90_2}, Kuz$'$minov and Shvedov \cite{KSh93,KSh96}
(for $p=q$), and Kopylov \cite{Kop07} for $p,q\in[1,\infty)$, 
$\frac{1}{p}-\frac{1}{q}<\frac{1}{\dim\,N+1}$.

The main result of the paper (Theorem~\ref{thm: main global}) 
states that the $L_{q,p}$-co\-ho\-mo\-lo\-gy 
$H^k_{q,p}(C_{a,b}^h N)$ of~the~twisted cylinder~$C_{a,b}^h N$ with~$q\ge p\geq 1$ and 
$\frac{1}{p}-\frac{1}{q}<\frac{q-1}{q(\dim N+1)}$
is zero provided that the~de~Rham cohomology $H^k_{\mathrm{DR}}(N)$ of~the~base~$N$
is trivial and some integral conditions on the~twisting function involving~$p$, $q$ and
an~auxiliary parameter~$\overline{p}$ are fulfilled.
\smallskip                                                                                   

The~paper is organized as follows: In~Sec.~\ref{bd}, we recall some basic definitions 
concerning the~$L_{q,p}$-cohomology of~Riemannian manifolds. 
Sec.~\ref{diff_twis} describes the~representations 
of~differential forms on~a~twisted cylinder obtained
in~\cite{GKop16} and analogous to~the~representations of~forms on~a~warped product
proposed by~Gol$'$dshtein, Kuz$'$minov, and Shvedov in~\cite{GKSh90_1}. 
In~Sec.~\ref{weigh-sobp},
we develop a~version of~the~weighted Sobolev--Poncar\'e inequality for convex sets
in~$\mathbb{R}^n$ by~introducing a~homotopy operator and consider some of its 
consequences; the~exposition is based on~the~ideas of~Shartser suggested 
in~\cite{SharThes} and~\cite{Shar2011}. In~Sec.~\ref{new-homot}, 
we consider a~new homotopy 
operator~$A_\alpha$ on~differential forms defined on~a~convex domain in~$\mathbb{R}^n$
and show that it guarantees the~fulfillment of~an~inequality of~Sobolev--Poincar\'e-type
for~$q\ge p\ge 1$ and $\frac{1}{p}-\frac{1}{q}<\frac{1}{n}$. In~Sec.~\ref{global}, 
using the~ideas of~Shartser's article~\cite{Shar2011},
we ``glue'' local homotopy operators on~a~twisted cylinder to~obtain
a~global homotopy operator. In~Sec.~\ref{rel-cohom}, we use this global homotopy 
operator for~proving our above-metioned main result 
on~the~triviality of~the~$L_{q,p}$-cohomology 
of~a~twisted cylinder (Theorem~\ref{thm: main global}), and in~Sec.~\ref{cohom-asymp},
we extend this theorem to~asymptotic twisted  cylinders (Theorem~\ref{thm:app-abs}). 
Sec.~9 contains some examples.

\section{Basic Definitions}\label{bd}

We recall the main definitions and notations. 

Below we tacitly assume all manifolds to be oriented.

Let $M$ be a smooth oriented Riemannian manifold. 
Denote by $\mathcal{D}^k(M):=C_{0}^{\infty}(M,\Lambda^k)$
the space of all smooth differential $k$-forms with compact support
contained in~$M\setminus\partial M$ denote by $L_{loc}^{1}(M,\Lambda^k)$ 
the space of locally integrable differential forms.
                                                                                             
Denote by $L^{p}(M,\Lambda^{k})$ the Banach space
of locally integrable differential $k$-forms endowed with the norm
$\|\theta\|_{L^p(M,\Lambda^k)}:=\left(\int_{M}|\theta|^{p}dx\right)^{\frac{1}{p}}<\infty$
(as usual, we identify forms coinciding outside a~set of measure zero).
Of~course, we can add a~positive (smooth) weight~$\sigma:M\to \mathbb{R}$ and thus 
integrate $|\theta|^p \sigma^p$ to~obtain the~weighted $L^p$-space 
$L^{p}(M,\Lambda^{k},\sigma)$.

\begin{defn}
We call a~differential $(k+1)$-form
$\theta\in L^{1}_{loc}(M,\Lambda^{k+1})$ \emph{the weak exterior derivative}
(or \emph{differential}) of a differential $k$-form $\phi\in L^{1}_{\loc}(M,\Lambda^{k})$ 
and write $d\phi=\theta$ if 
\[
\int_{M}\theta\wedge\omega=(-1)^{k+1}\int_{M}\phi\wedge d\omega
\]
for any $\omega\in\mathcal{D}^{n-k}(M)$. \end{defn}

\begin{rem}
Note that the orientability of $M$ is not substantial in~this definition since one may
take integrals over orientable domains on $M$ instead of integrals \linebreak
over~$M$. 
\end{rem}
We then introduce an analog of Sobolev spaces for differential $k$-forms,
i.e., the space of $q$-integrable forms with $p$-integrable weak exterior
derivative: 
$$
\Omega_{q,p}^{k}(M)=\left\{ \,\omega\in L^{q}(M,\Lambda^{k})\;|\, 
d\omega\in L^{p}(M,\Lambda^{k+1})\right\}.
$$
This is a Banach space for the graph norm 
\[
\|\omega\|_{q,p}=\left(\|\omega\|_{L^q(M,\Lambda^k)}^{2}
+\|d\omega\|_{L^p(M,\Lambda^{k+1})}^{2}\right)^{1/2}.
\]
The space $\Omega_{q,p}^{k}(M)$ is a reflexive Banach space for
any $1<q,p<\infty$. This can be proved using standard arguments of
functional analysis.



We now define our basic ingredients (for three parameters $r,q,p$).
\begin{defn}
Put

(a) $Z_{p,r}^{k}(M)=\mathrm{Ker}[d:\Omega_{p,r}^{k}(M)\to L^{r}(M,\Lambda^{k+1})]$.

(b) $B_{q,p}^{k}(M)=\mathrm{Im}[d:\Omega_{q,p}^{k-1}(M)\to L^{p}(M,\Lambda^{k})]$.


\end{defn}

The subspace $Z_{p,r}^{k}(M)$ does not depend on $r$ and is a closed
subspace in $L^{p}(M,\Lambda^{k})$ (see Lemma~\cite[Lemma~2.4(i)]{GT2012}).
This allows us to use the notation $Z_{p}^{k}(M)$ for all $Z_{p,r}^{k}(M)$.
Note that $Z_{p}^{k}(M)\subset L^{p}(M,\Lambda^{k})$ is always a
closed subspace but that is in general not true for $B_{q,p}^{k}(M)$.
Denote by $\overline{B}_{q,p}^{k}(M)$ its closure in the $L^{p}$-topology.
Observe also that since $d\circ d=0$,
one has $\overline{B}_{q,p}^{k}(M)\subset Z_{p}^{k}(M)$. Thus, 
\[
B_{q,p}^{k}(M)\subset\overline{B}_{q,p}^{k}(M)\subset Z_{p}^{k}(M)
=\overline{Z}_{p}^{k}(M)\subset L^{p}(M,\Lambda^{k}).
\]

\begin{defn}
Suppose that $1\leq q,p\leq\infty$. The \emph{$L_{q,p}$-cohomology}
of $(M,g)$ is defined as the quotient 
\[
H_{q,p}^{k}(M):=Z_{p}^{k}(M)/B_{q,p}^{k}(M)\,,
\]
 and the \emph{reduced $L_{q,p}$-cohomology} of $(M,g)$ is, by definition, 
the space
\[
\overline{H}_{q,p}^{k}(M):=Z_{p}^{k}(M)/\overline{B}_{q,p}^{k}(M)\,.
\]
 
\end{defn}

Since $B_{p,q}^{k}$ is not always closed, the $L_{p}$-cohomology
is in general a (non-Hausdorff) semi-normed space, while the reduced
$L_{p}$-cohomology is a Banach space. 

Below $|X|$ stands for the volume of a Riemannian manifold $(X,g)$.

\medskip

It follows from the results of~\cite{GT2006} that, under suitable assumptions on
$p,q$, the $L_{q,p}$-cohomology of a Riemannian manifold~$M$ can be expressed 
in terms of smooth forms.

Let $C^\infty(M,\Lambda^k)$ be the~space of~smooth $k$-forms on~$M$.

Introduce the notations:
\begin{gather*}                          
C^\infty L^p(M,\Lambda^k):=C^\infty(M,\Lambda^k)\cap L^p(M,\Lambda^k);  \\
C^\infty L^p(M,\Lambda^k,\sigma):=C^\infty(M,\Lambda^k)\cap L^p(M,\Lambda^k,\sigma);\\
C^\infty\Omega_{q,p}^{k}(M):=C^\infty(M,\Lambda^k)\cap\Omega_{q,p}^{k}(M);  \\
C^\infty H_{q,p}^{k}(M)
:= \frac{C^\infty(M,\Lambda^k)\cap Z_p^k(M)}{C^\infty(M,\Lambda^k)\cap B_{q,p}^{k}(M)}; \\
C^\infty \overline{H}_{q,p}^{k}(M)
:= \frac{C^\infty(M,\Lambda^k)\cap Z_p^k(M)}
{C^\infty(M,\Lambda^k)\cap\overline{B}_{q,p}^{k}(M)}.
\end{gather*}

\begin{thm}\label{sm-cohom} 
\emph{\cite[Theorem~12.5 and~12.8, Corollary~12.9]{GT2006}.}
Let $(M,g)$ be a $n$-dimensional
Riemannian manifold and suppose the~fulfillment of~one of~the~following
conditions:  

$\bullet$ $p,q\in (1,\infty)$ and $\frac{1}{p}-\frac{1}{q} \leq \frac{1}{n}$;

$\bullet$ $p,q\in [1,\infty)$ and $\frac{1}{p}-\frac{1}{q} < \frac{1}{n}$. 

Then the cohomology $H^*_{q,p}(M)$ can be represented by smooth forms, and thus
$H^*_{q,p}(M)=C^\infty H_{q,p}^*(M)$.

More exactly, any closed form in $Z^k_p(M)$ is cohomologous to a
smooth form in~$L^p(M)$. Furthermore, if two smooth closed forms
$\alpha,\beta \in C^{\infty}(M,\Lambda^k)\cap Z^k_p(M)$ are cohomologous
modulo $d\Omega_{q,p}^{k-1}(M)$ then they are cohomologous modulo
$dC^{\infty}\Omega_{q,p}^{k-1}(M)$.

Similarly, any reduced cohomology class can be represented by~a~smooth form.
\end{thm}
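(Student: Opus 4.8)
The plan is to prove this by a regularization (smoothing) argument of de~Rham--Whitney type adapted to the~$L^p$ setting. First I would fix a~locally finite cover of~$M$ by coordinate balls $\{U_i\}$ together with a~subordinate smooth partition of unity $\{\chi_i\}$, chosen so that on~each $U_i$ the~metric is uniformly comparable to~the~Euclidean one. On a~Euclidean ball one has the~classical mollification operator $R^i_\varepsilon$ (convolution with a~smooth bump) together with a~Poincar\'e-type homotopy operator $A^i_\varepsilon$ satisfying $\mathrm{id}-R^i_\varepsilon=dA^i_\varepsilon+A^i_\varepsilon d$; the~essential analytic fact is that $R^i_\varepsilon$ is bounded on~$L^p$ and on~$L^q$ simultaneously, while $A^i_\varepsilon$ maps $L^p(U_i,\Lambda^k)$ into $L^q(U_i,\Lambda^{k-1})$ precisely because the~Sobolev--Poincar\'e inequality on~a~ball holds when $\frac1p-\frac1q\le\frac1n$ (and the~embedding is bounded in~the~endpoint case exactly when $1<p,q<\infty$). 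This is where the~dimensional restriction on~$p$ and $q$ in~the~statement comes from.

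Second, I would assemble a~global smoothing operator $R$ and a~global homotopy operator $G$ out of the~local ones, by~the~same gluing technique for local homotopy operators that is developed in~the~later sections of~the~present paper. Given a~closed form $\omega\in Z^k_p(M)$, one inductively corrects it chart by chart: having made $\omega$ smooth on~$U_1\cup\dots\cup U_{j-1}$, one mollifies on~$U_j$ using a~cut-off supported near~$U_j$, the~correction being $d$ of an~explicit $(k-1)$-form built from $A^j_\varepsilon$ applied to~(a~cut-off of) the~current form. Because $d\omega=0$, each correction changes $\omega$ only by an~exact term $d\eta_j$ with $\eta_j$ supported near~$U_j$ and $\|\eta_j\|_{L^q}\le C_j\|\omega\|_{L^p(U_j')}$; choosing the~regularization parameters $\varepsilon_j\to0$ fast enough makes the~sum $\eta=\sum_j\eta_j$ converge in~$L^q$, with $d\eta=\tilde\omega-\omega\in L^p$, so that $\eta\in\Omega^{k-1}_{q,p}(M)$ and $\tilde\omega:=\omega+d\eta$ is a~smooth closed $L^p$-form cohomologous to~$\omega$ modulo $d\Omega^{k-1}_{q,p}(M)$. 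This gives the~first assertion.

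Third, for the~claim that two smooth closed forms $\alpha,\beta\in C^\infty(M,\Lambda^k)\cap Z^k_p(M)$ cohomologous modulo $d\Omega^{k-1}_{q,p}(M)$ are already cohomologous modulo $dC^\infty\Omega^{k-1}_{q,p}(M)$: write $\alpha-\beta=d\eta$ with $\eta\in\Omega^{k-1}_{q,p}(M)$ and apply the~homotopy identity $\eta-R\eta=dG\eta+Gd\eta=dG\eta+G(\alpha-\beta)$, so that $R\eta+G(\alpha-\beta)=\eta-dG\eta$ is smooth and $d(R\eta+G(\alpha-\beta))=d\eta=\alpha-\beta$. Since $R$ is bounded on~$L^q$ and $G$ maps $L^p(\Lambda^k)$ into $L^q(\Lambda^{k-1})$, this smooth primitive lies in~$\Omega^{k-1}_{q,p}(M)$. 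Running the~same argument with $\overline{B}$ in~place of~$B$ (that is, approximating an~$L^p$-limit of~exact forms by~smooth exact forms and smoothing the~approximants) yields the~statement for the~reduced cohomology.

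The step I expect to~be the~main obstacle is the~second one, the~global gluing: one must control the~accumulation of~infinitely many local corrections and, at~the~same time, preserve both the~$L^q$-bound on~the~primitive and the~$L^p$-bound on~its differential. The~endpoint case $\frac1p-\frac1q=\frac1n$ with $1<p,q<\infty$ is the~delicate point here, since it rests on~the~\emph{boundedness} of~the~Sobolev embedding, not merely on~its continuity into a~larger Lebesgue space.
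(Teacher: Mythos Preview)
The paper does not prove Theorem~\ref{sm-cohom}: it is quoted verbatim from \cite[Theorem~12.5 and~12.8, Corollary~12.9]{GT2006}, with no argument supplied here. So there is no ``paper's own proof'' to compare your proposal against; the theorem functions purely as a black-box input from the literature, invoked only once (in the last line of the proof of Theorem~\ref{thm: main global}) to pass from smooth cohomology $C^\infty H_{q,p}^k$ to full $L_{q,p}$-cohomology.

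That said, your sketch is broadly in the spirit of how the result is established in \cite{GT2006}: one builds local smoothing and homotopy operators on charts and assembles them globally via a homotopy identity $\mathrm{id}-R=dG+Gd$. Two remarks on your outline. First, you invoke ``the same gluing technique \dots\ developed in the later sections of the present paper,'' but that gluing (Section~\ref{global}) is tailored to a \emph{finite} good cover of a compact base~$N$, whereas Theorem~\ref{sm-cohom} must hold for an arbitrary, possibly non-compact manifold; your inductive chart-by-chart correction with $\varepsilon_j\to 0$ is the right substitute, but you should make explicit that the cover is locally finite and that each correction $\eta_j$ has support meeting only finitely many charts, so that $\sum_j\eta_j$ is a locally finite sum and no global $L^q$-convergence need be arranged. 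Second, in step~3 you use that the \emph{global} operator $G$ maps $L^p(\Lambda^k)\to L^q(\Lambda^{k-1})$; this is true for each local $A^i_\varepsilon$ under the hypothesis $\tfrac1p-\tfrac1q\le\tfrac1n$, but after gluing with cut-offs the commutator terms $d\chi_i\wedge(\cdot)$ appear, and one must check they are absorbed into the estimates. None of this is a genuine gap, and you correctly flag the global assembly as the main obstacle; but since the present paper simply cites the result, there is nothing further to compare.
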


\section{Differential Forms on a Twisted Cylinder}\label{diff_twis}

From now on, $C_{a,b}^h N$ is the twisted cylinder $[a,b)\times_h N$, 
that is, the product of~a~half-interval $[a,b)$ and a~closed smooth 
$n$-dimensional Riemannian manifold 
$(N, g_N)$ equipped with the Riemannian metric $dt^2+ h^2(t,x) g_N$, where 
$h: [a,b)\times N \to \mathbb{R}$ is a~smooth positive function.

Every differential form on~$[a,b)\times N$ admits a unique representation of the form
$\omega=\omega_{A}+dt\wedge\omega_{B}$,
where the forms $\omega_{0}$ and $\omega_{1}$ do not contain $dt$ (cf. \cite{GKSh90_1}).
It means that $\omega_{0}$ and $\omega_{1}$ can be viewed
as one-parameter families $\omega_{A}(t)$ and $\omega_{B}(t)$, $t\in I$,
of differential forms on $N$. 

The modulus of a form $\omega$ of degree $k$ on $C_{a,b}^h N$ is expressed
via the moduli of $\omega_{A}(t)$ and $\omega_{B}(t)$ on $N$ as
follows: 
\begin{equation}\label{eq:module}         
|\omega(t,x)|_{C_{a,b}^h N}=\bigl[h^{-2k}(t,x) |\omega_{A}(t,x)|_{N}^{2} + h^{-2(k+1)}(t,x) 
|\omega_{B}(t,x)|_{N}^{2}\bigr]^{1/2}
\end{equation}

Consequently, 
\begin{multline}\label{eq:norm}
\|\omega\|_{L^p(C_{a,b}^h N,\Lambda^k)} \\
=\left[\!\int_{a}^{b}\int_{N}\bigl(h^{2(\frac{n}{p}-k)}(t,x) |\omega_{A}(t,x)|_{N}^{2}\!
+ h^{2(\frac{n}{p}-k+1)}(t,x) |\omega_{B}(t,x)|_{N}^{2}\bigr)^{\frac{p}{2}} 
dxdt\!\right]^{\frac{1}{p}}.
\end{multline}

Put 
\[
f_{k,p}(t)=\min_{x\in N} \bigl\{ h^{\frac{n}{p}-k}(t,x) \bigr\}
\]
 and 
\[
F_{k,p}(t)=\max_{x\in N} \bigl\{ h^{\frac{n}{p}-k}(t,x) \bigr\}.
\]

\section{The Weighted Sobolev--Poincare Inequality for Convex Sets in~$\mathbb{R}^n$}
\label{weigh-sobp}

Denote by $\Omega_{loc}^{*}(M)$ the space  all locally integrable
differential forms with locally integrable weak differential. 

Suppose that $D\subset R^{n}$ is a convex set and $\psi_{y}:D\times[0,1]\to D$,
$\psi_{y}(x,t):=tx+(1-t)y$, is the homotopy induced by the convex structure.
For a $k$-form $\omega\in\Omega_{loc}^{k}(D)$ the pullback $\psi{}_{y}^{*}\omega$
can be written in the form 
\[
\psi{}_{y}^{*}\omega(x,t)
=\left(\psi{}_{y}^{*}\omega\right)_{0}(x,t)+dt\wedge\left(\psi{}_{y}^{*}\omega\right)_{1}(x,t),
\]
where $\left(\psi{}_{y}^{*}\omega\right)_{0}$ and $\left(\psi{}_{y}^{*}\omega\right)_{1}$
do not contain~$dt$.

For each $y\in D$ define a homotopy operator 
\[
K_y:\Omega_{loc}^{k}(D)\to\Omega_{loc}^{k-1}(D)
\]
as follows:
\[
K_y\omega(x):=\int_{0}^{1}\left(\psi{}_{y}^{*}\omega\right)_{1}(t)\, dt
\]

It is easy to~see that $K_y$ takes smooth forms to~smooth forms.
It is proved in \cite{IL93} that $K_{y}d\omega+dK_{y}\omega=\omega$
The following proposition is a generalization of results from~\cite{BoMi95} and
Shartser's thesis \cite{SharThes} (see also \cite{Shar2011}) to the weighted case and 
to unbounded convex domains.

\begin{prop}\label{bounded}
Suppose that $D$ is a~convex set in $\mathbb{R}^n$, $q\geq p\geq1$,  
and $\beta:D\to\mathbb{R}$ is a~positive smooth function. 

If the inequality 
\[
C(k,p,q,n,\beta)
:=\int_{0}^{1}  
\sup_{z\in D} \| \beta(x) \mathbf{1}_{tx+(1-t)D}(z) \|_{L^q(D,dx)}
t^k (1-t)^{-n/p} dt < \infty
\]
holds then the inequality
\[
\left\| \beta(x)\left\| \frac{K_{y}d\omega(x)}
{\left|x-y\right|}\right\|_{L^{p}(D,dy)}\right\| _{L^{q}(D,dx)}
\leq C(k,p,q,n,\beta)\left\| d\omega\right\| _{L^{p}(D,\Lambda^{k+1})}.
\]
is valid for every $\omega\in\Omega_{loc}^{k}(D)$ such that $d\omega\in L^p(D,\Lambda^{k+1})$.
Here $\mathbf{1}_{xt+(1-t)D}$ is the characteristic function of the set $xt+(1-t)D$.
\end{prop}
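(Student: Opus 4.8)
The plan is to start from the pointwise formula for the homotopy operator. Recall that
\[
K_{y}d\omega(x)=\int_{0}^{1}\bigl(\psi_{y}^{*}d\omega\bigr)_{1}(x,t)\,dt,
\]
and a direct computation of the pullback under $\psi_{y}(x,t)=tx+(1-t)y$ shows that the $dt$-component of $\psi_{y}^{*}d\omega$ at the point $(x,t)$ involves $d\omega$ evaluated at $tx+(1-t)y$ contracted with the vector $x-y$, carrying a factor $t^{k}$ from the $k$ remaining differentials $d(tx+(1-t)y)$ that must be pulled back. Hence pointwise one has a bound of the shape
\[
\bigl|K_{y}d\omega(x)\bigr|\le |x-y|\int_{0}^{1} t^{k}\,\bigl|d\omega\bigl(tx+(1-t)y\bigr)\bigr|\,dt,
\]
so that
\[
\frac{|K_{y}d\omega(x)|}{|x-y|}\le\int_{0}^{1} t^{k}\,\bigl|d\omega\bigl(tx+(1-t)y\bigr)\bigr|\,dt.
\]
This is the key pointwise inequality; establishing it carefully (tracking the exact power of $t$ and the $|x-y|$ factor) is the first step.

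Next I would take $L^{p}$-norms in the variable $y$ over $D$. Applying Minkowski's integral inequality to move the $\|\cdot\|_{L^{p}(D,dy)}$ norm inside the $t$-integral gives
\[
\Bigl\|\tfrac{K_{y}d\omega(x)}{|x-y|}\Bigr\|_{L^{p}(D,dy)}
\le\int_{0}^{1} t^{k}\Bigl\|\,\bigl|d\omega\bigl(tx+(1-t)y\bigr)\bigr|\,\Bigr\|_{L^{p}(D,dy)}\,dt.
\]
For fixed $x$ and $t$, the substitution $z=tx+(1-t)y$ has Jacobian $(1-t)^{n}$ and maps $D$ into $tx+(1-t)D$; therefore
\[
\Bigl\|\,\bigl|d\omega(tx+(1-t)y)\bigr|\,\Bigr\|_{L^{p}(D,dy)}
=(1-t)^{-n/p}\Bigl(\int_{tx+(1-t)D}|d\omega(z)|^{p}\,dz\Bigr)^{1/p}
\le(1-t)^{-n/p}\,\|d\omega\|_{L^{p}(D,\Lambda^{k+1})}.
\]
Multiplying through by $\beta(x)$ and then taking the $L^{q}(D,dx)$ norm, I would again use Minkowski's inequality (valid since $q\ge 1$) to bring the outer norm inside the $t$-integral:
\[
\Bigl\|\beta(x)\Bigl\|\tfrac{K_{y}d\omega(x)}{|x-y|}\Bigr\|_{L^{p}(D,dy)}\Bigr\|_{L^{q}(D,dx)}
\le\int_{0}^{1} t^{k}(1-t)^{-n/p}\,\Bigl\|\beta(x)\Bigl(\int_{tx+(1-t)D}|d\omega(z)|^{p}dz\Bigr)^{1/p}\Bigr\|_{L^{q}(D,dx)}\,dt.
\]

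To finish, I would bound the inner factor by the supremum defining $C(k,p,q,n,\beta)$. Writing $\bigl(\int_{tx+(1-t)D}|d\omega(z)|^{p}dz\bigr)^{1/p}=\bigl\| \mathbf{1}_{tx+(1-t)D}(z)\,d\omega(z)\bigr\|_{L^{p}(D,dz)}$ and pulling the $|d\omega|$ part out via the crude estimate $\le\|d\omega\|_{L^{p}(D,\Lambda^{k+1})}\cdot\sup_{z\in D}$ of the characteristic-function weight — more precisely, by first freezing $z$, estimating $\|\beta(x)\mathbf{1}_{tx+(1-t)D}(z)\|_{L^{q}(D,dx)}\le\sup_{z\in D}\|\beta(x)\mathbf{1}_{tx+(1-t)D}(z)\|_{L^{q}(D,dx)}$, and then integrating in $z$ against $|d\omega(z)|^{p}$ and using $q\ge p$ together with Minkowski/Hölder to exchange the order of the $L^{q}(dx)$ and $L^{p}(dz)$ norms — one arrives exactly at
\[
t^{k}(1-t)^{-n/p}\sup_{z\in D}\|\beta(x)\mathbf{1}_{tx+(1-t)D}(z)\|_{L^{q}(D,dx)}\cdot\|d\omega\|_{L^{p}(D,\Lambda^{k+1})},
\]
and integrating in $t$ produces the constant $C(k,p,q,n,\beta)$ and the claimed inequality. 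The main obstacle is the last step: justifying the interchange of the $L^{q}(dx)$-norm and the $L^{p}(dz)$-integration when $q\ge p$ — this is where the hypothesis $q\ge p\ge 1$ is genuinely used, via the generalized Minkowski inequality for the ratio $q/p\ge 1$ — and making sure all applications of Fubini and Minkowski are legitimate on the possibly unbounded domain $D$, which is exactly where the finiteness assumption on $C(k,p,q,n,\beta)$ does its work.
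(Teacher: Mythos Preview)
Your proposal is correct and follows essentially the same route as the paper: the pointwise bound $|(\psi_y^*d\omega)_1(x,t)|\le |x-y|\,t^k|d\omega(tx+(1-t)y)|$, Minkowski's inequality to pull the $t$-integral outside, the change of variables $z=tx+(1-t)y$ producing the factor $(1-t)^{-n/p}$ and the characteristic function $\mathbf{1}_{tx+(1-t)D}$, and finally the exchange of the $L^q(dx)$ and $L^p(dz)$ norms via Minkowski for the exponent $q/p\ge 1$, followed by taking the supremum in $z$. The only cosmetic difference is that the paper applies Minkowski once to the combined $L^q(dx)$-$L^p(dy)$ expression, whereas you apply it separately to each norm; the outcome is the same.
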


\begin{proof}
By the definition of $K_{y},$ we have
\begin{multline*}
\hspace{-3mm}
\left\| \beta(x)\! \left\| \frac{K_{y}d\omega(x)}{\left|x-y\right|}\right\|_{L^{p}(D,dy)}
\right\| _{L^{q}(D,dx)} \!\!\!\!
=\! \left\| \beta(x)\! \left\| 
\int_{0}^{1} \!\!
\frac{\left(\psi{}_{y}^{*}d\omega\right)_{1}(x,t)}{\left|x-y\right|}dt\right
\|_{L^{p}(D,dy)}\right\| _{L^{q}(D,dx)}
\\
\leq \int_{0}^{1}\left\| \beta(x)\left\| \frac{\left(\psi{}_{y}^{*}d\omega\right)_{1}(x,t)}
{\left|x-y\right|}\right\| _{L^{p}(D,dy)}\right\| _{L^{q}(D,dx)}dt
\\
\le \int_{0}^{1}\left\{ \int_{D}\beta^q(x)\left[\int_{D}\frac{\left|\left(\psi{}_{y}^{*}d\omega\right)_{1}
(x,t)\right|^{p}}{\left|x-y\right|^{p}}dy\right]^{q/p}dx\right\} ^{1/q}dt.
\end{multline*}

As usual, we identify the tangent space to~$\mathbb{R}^n$ at any of its points with $\mathbb{R}^n.$

By easy calculations, 
\[
\left|\left(\psi_{y}^{*}d\omega\right)_{1}(x,t)\right|
\leq\left|x-y\right|t^{k}\left|d\omega\left(\psi_{y}(x,t)\right)\right|.
\]
Therefore, 
\begin{multline*}
\int_{0}^{1}\left\{ \int_{D} \beta^q(x)
\left[\int_{D}\frac{\left|\left(\psi_{y}^{*}d\omega\right)_{1}
(x,t)\right|^{p}}{\left|x-y\right|^{p}}dy\right]^{q/p}dx\right\} ^{1/q}dt
\\
\leq \int_{0}^{1}\left\{ \int_{D} \beta^q(x)\left[\int_{D}t^{kp}
\left|d\omega\left(\psi_{y}(x,t)\right)\right|^{p}dy\right]^{q/p}dx\right\} ^{1/q} dt \\
= \int_{0}^{1}\left\{ \int_{D}\beta^q(x)\left[\int_{D}t^{kp}
\left|d\omega\left(tx+(1-t)y\right)\right|^{p}dy\right]^{q/p}dx\right\}^{1/q} dt :=I.
\end{multline*}
The change of variables $z=tx+(1-t)y$ in~the~inner integral yields
\[
I=\int_{0}^{1}\left\{ \int_{D}\beta^q(x) \left[\int_{tx+(1-t)D}
\left|d\omega(z)\right|^{p} dz \right]^{q/p} dx\right\} ^{1/q} t^{k}(1-t)^{-n/p}dt
\]

Since $D$ is convex, the set $tx+(1-t)D$ is contained in~$D$ for all $x\in D$ and
$t\in [0,1]$. Using Minkowski's integral inequality, we infer
\begin{align*}
& \left\{\int_D \beta^q(x) 
 \left[ \int_{tx+(1-t)D}
\left|d\omega (z)\right|^p dz\right]^{q/p} dx\right\}^{1/q}
\\
& =\left\{ \int_{D}\beta^q(x)
\left[\int_{D} \mathbf{1}_{tx+(1-t)D}(z) \left|d\omega(z) 
\right|^{p}dz\right]^{q/p} dx \right\}^{1/q}
\\
& = \left\{ \left( \int_D  \left[ \int_D \beta^{p}(x) \mathbf{1}_{tx+(1-t)D}(z)
\left|d\omega(z) \right|^{p} dz \right]^{q/p} dx \right)^{p/q} \right\}^{1/p}
\\
& = \left\{ \left\| \int_D \beta^{p}(x) \mathbf{1}_{tx+(1-t)D}(z)
\left|d\omega(z) \right|^{p} dz \right\|_{L^{q/p}(D,dx)} \right\}^{1/p}
\\
& \le \left\{ \int_D \left\| \beta^{p}(x) \mathbf{1}_{tx+(1-t)D}(z)
\left|d\omega(z)\right|^{p} \right\|_{L^{q/p}(D,dx)} dz \right\}^{1/p}
\\
& = \left\{ \int_D \left( \int_D \beta^q(x) \mathbf{1}_{tx+(1-t)D}(z) 
\left|d\omega(z)\right|^{q} dx \right)^{p/q} dz \right\}^{1/p}
\\
& = \left\{ \int_D \left( \int_D \beta^q(x) \mathbf{1}_{tx+(1-t)D}(z) dx \right)^{p/q} 
\left|d\omega(z)\right|^{p} dz \right\}^{1/p}
\\
& \le \left( \sup_{z\in D} \int_D \beta^q(x) \mathbf{1}_{tx+(1-t)D}(z) dx \right)^{1/q}
\left( \int_D \left| d\omega(z) \right|^p dz \right)^{1/p}
\\
& = \sup_{z\in D} \| \beta(x) \mathbf{1}_{tx+(1-t)D}(z) \|_{L^q(D,dx)}\,
\left\|d\omega\right\|_{L^p(D,\Lambda^{k+1})}. 
\end{align*}
The proposition follows.
\end{proof}

\smallskip

Estimate 
\[
C(k,p,q,n,\beta) =\int_{0}^{1}  
\sup_{z\in D} \| \beta(x) \mathbf{1}_{tx+(1-t)D}(z) \|_{L^q(D,dx)}
t^k (1-t)^{-n/p} dt
\]
in particular cases. 

\begin{cor}
Suppose that $D$ is a~convex set of~finite measure in~$\mathbb{R}^n$, $q\ge p\ge 1$,
$\frac{1}{p}-\frac{1}{q} < \frac{1}{n}$, and the~weight $\beta(x)\equiv 1$. Then
\[
C(k,p,q,n,1)\le |D|^{1/q}\int_{0}^{1}t^{k-n/q}(1-t)^{-n/p}\min(t^{n/q},(1-t)^{n/q})dt.
\]
\end{cor}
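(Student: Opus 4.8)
The plan is to estimate the quantity $\sup_{z\in D}\|\mathbf{1}_{tx+(1-t)D}(z)\|_{L^q(D,dx)}$ explicitly when $\beta\equiv1$, and then integrate against $t^k(1-t)^{-n/p}$. For a fixed $z\in D$, the set of $x\in D$ for which $z\in tx+(1-t)D$ is
\[
\{x\in D : z-tx \in (1-t)D\}
=\Bigl\{x\in D : \tfrac{z-tx}{1-t}\in D\Bigr\}.
\]
Writing this as $x\in \tfrac1t\bigl(z-(1-t)D\bigr)\cap D$, I see that it is the intersection of $D$ with an affine image of $D$ scaled by the factor $(1-t)/t$; hence its Lebesgue measure is at most $\min\bigl(|D|,\,((1-t)/t)^n|D|\bigr)$. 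Therefore
\[
\sup_{z\in D}\int_D \mathbf{1}_{tx+(1-t)D}(z)\,dx
\le |D|\,\min\!\Bigl(1,\bigl(\tfrac{1-t}{t}\bigr)^n\Bigr)
= |D|\,t^{-n}\min\bigl(t^{n},(1-t)^{n}\bigr),
\]
and raising to the power $1/q$ gives
\[
\sup_{z\in D}\|\mathbf{1}_{tx+(1-t)D}(z)\|_{L^q(D,dx)}
\le |D|^{1/q}\,t^{-n/q}\min\bigl(t^{n/q},(1-t)^{n/q}\bigr).
\]

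Substituting this bound into the definition of $C(k,p,q,n,1)$ from Proposition~\ref{bounded} yields
\[
C(k,p,q,n,1)\le |D|^{1/q}\int_0^1 t^{k-n/q}(1-t)^{-n/p}\min\bigl(t^{n/q},(1-t)^{n/q}\bigr)\,dt,
\]
which is exactly the claimed estimate. It remains only to check that the integral converges, i.e. that the right-hand side is finite: near $t=0$ the integrand behaves like $t^{k-n/q}\cdot t^{n/q}=t^k$, which is integrable since $k\ge0$; near $t=1$ it behaves like $(1-t)^{-n/p}\cdot(1-t)^{n/q}=(1-t)^{-(n/p-n/q)}=(1-t)^{-n(1/p-1/q)}$, which is integrable precisely because the hypothesis $\tfrac1p-\tfrac1q<\tfrac1n$ gives exponent $>-1$. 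So the finiteness of $C(k,p,q,n,1)$ follows as well, and Proposition~\ref{bounded} applies with this constant.

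The only mildly delicate point is the geometric measure estimate for the intersection $D\cap\tfrac1t(z-(1-t)D)$: one must observe that the map $x\mapsto \tfrac{z-tx}{1-t}$ is affine with linear part $-\tfrac{t}{1-t}\,\mathrm{Id}$, so the preimage of $D$ under it has measure $\bigl(\tfrac{1-t}{t}\bigr)^n|D|$, and intersecting with $D$ only decreases this, while trivially the measure is also at most $|D|$. Everything else is a routine splitting of a Beta-type integral, so I do not expect any real obstacle; the main content is simply the clean bookkeeping of the scaling factor $(1-t)/t$ that produces the $\min$ term.
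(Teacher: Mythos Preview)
Your argument is correct and essentially the same as the paper's: both bound the measure of the relevant set of $x$'s by the minimum of the measures of two (scaled, translated) copies of $D$, yielding the factor $\min(t^{n/q},(1-t)^{n/q})$. The only cosmetic difference is that the paper first performs the change of variables $u=tx$ (picking up the factor $t^{-n/q}$ from the Jacobian) and then estimates $|tD\cap(z-(1-t)D)|\le|D|\min(t^n,(1-t)^n)$, whereas you work directly with $|D\cap\tfrac1t(z-(1-t)D)|\le|D|\min(1,((1-t)/t)^n)$; these are the same bound up to the scaling by $t^n$.
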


\begin{rem}
It is easy to see that the~integral of~the~corollary exists because of~the~conditions
imposed on~$p$ and~$q$.
\end{rem}

\begin{proof}
Using the change of variables $u=tx$, we obtain
\begin{multline*}
\int_{0}^{1} \sup_{z\in D}\left\| \mathbf{1}_{tx+(1-t)D}(z)\right\|_{L^q(D,dx)} t^{k}(1-t)^{-n/p}dt
\\
=\int_{0}^{1}\sup_{z\in D}\left\| \mathbf{1}_{u+(1-t)D}(z) 
\right\|_{L^{q}(tD,du)}t^{k-n/q}(1-t)^{-n/p}dt.
\end{multline*}

Note that $\left|tD\cap\left\{ u+(1-t)D\right\} \right|\leq\left|D\right|\min(t^{n},(1-t)^{n})$.
It follows that 
\begin{gather*}
\left\| \mathbf{1}_{u+(1-t)D}(z)\right\|_{L^q(tD,du)}\leq 
|D|^{1/q} \min(t^{n/q},(1-t)^{n/q});
\\
C(k,p,q,n,1)\leq |D|^{1/q}\int_{0}^{1}t^{k-n/q}(1-t)^{-n/p}\min(t^{n/q},(1-t)^{n/q})dt
\end{gather*}
\end{proof}

\begin{cor}\label{cor:beta}
Suppose that $U$ is a~convex set of finite measure 
$|U|$ in $\mathbb{R}^n$, $D=[a,b)\times U$, $q\ge p\ge 1$, 
$\frac{1}{p}-\frac{1}{q}< \frac{q-1}{q(n+1)}$,
and $\beta:[a,b)\to \mathbb{R}$ is an~integrable positive function. If 
$\left\| \beta \right\|_{L^{q}([a,b))}< \infty$ then 
\[
C(k,p,q,n,\beta)\leq\left|U\right|^{1/q}\left\| \beta \right\|_{L^{q}([a,b))}.
\]
\end{cor}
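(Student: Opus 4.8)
The plan is to apply Proposition~\ref{bounded} to the convex set $D=[a,b)\times U\subset\mathbb{R}^{n+1}$, with $\beta$ read as a weight on $D$ depending only on the interval variable, and then estimate the constant $C$ occurring there by exploiting the product structure of $D$ together with the fact that $\beta$ ignores the $U$-directions. I would write points of $D$ as $x=(s,u)$ and $z=(\tau,v)$ with $s,\tau\in[a,b)$ and $u,v\in U$. Since $D$ is a product,
\[
tx+(1-t)D=\bigl(ts+(1-t)[a,b)\bigr)\times\bigl(tu+(1-t)U\bigr),
\]
so $\mathbf{1}_{tx+(1-t)D}(z)=\mathbf{1}_{ts+(1-t)[a,b)}(\tau)\,\mathbf{1}_{tu+(1-t)U}(v)$, and, because $\beta(x)=\beta(s)$ has no $u$-dependence, the norm $\|\beta(x)\mathbf{1}_{tx+(1-t)D}(z)\|_{L^q(D,dx)}^{q}$ splits as the product of $\int_a^b\beta(s)^q\mathbf{1}_{ts+(1-t)[a,b)}(\tau)\,ds$ and $\int_U\mathbf{1}_{tu+(1-t)U}(v)\,du$.

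Next I would estimate the two factors separately, uniformly in $z$. The interval factor satisfies $\int_a^b\beta(s)^q\mathbf{1}_{ts+(1-t)[a,b)}(\tau)\,ds\le\|\beta\|_{L^q([a,b))}^{q}$, with no gain in $t$; this crude bound is all that is available, and it is exactly why one wants the weight on the (possibly unbounded) interval factor rather than a finite-measure hypothesis there. For the $U$-factor, $\{u\in U:v\in tu+(1-t)U\}$ equals $U$ intersected with a translate of $-\tfrac{1-t}{t}U$, hence has measure at most $|U|\min\bigl(1,(\tfrac{1-t}{t})^{\,n}\bigr)$ — the familiar scaling estimate already used in the previous corollary. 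Multiplying the two bounds and taking $q$-th roots gives, uniformly in $z\in D$,
\[
\bigl\|\beta(x)\,\mathbf{1}_{tx+(1-t)D}(z)\bigr\|_{L^q(D,dx)}\le\|\beta\|_{L^q([a,b))}\,|U|^{1/q}\,\min\bigl(1,(\tfrac{1-t}{t})^{\,n/q}\bigr).
\]

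Substituting this into the definition of $C$ — whose $t$-weight, since $D\subset\mathbb{R}^{n+1}$, is $t^{k}(1-t)^{-(n+1)/p}$ — reduces the corollary to the finiteness of
\[
\int_0^1\min\bigl(1,(\tfrac{1-t}{t})^{\,n/q}\bigr)\,t^{k}(1-t)^{-(n+1)/p}\,dt,
\]
for then $C(k,p,q,n,\beta)$ is bounded by this constant times $|U|^{1/q}\|\beta\|_{L^q([a,b))}$, which is the asserted estimate up to a finite dimensional factor. This convergence is the only real point, and it is what pins down the exponent in the hypothesis: near $t=0$ the integrand is comparable to $t^{k}$, integrable since $k\ge0$; near $t=1$ it is comparable to $(1-t)^{\,n/q-(n+1)/p}$, integrable exactly when $\tfrac{n+1}{p}-\tfrac{n}{q}<1$, and an elementary rearrangement shows this inequality is equivalent to $\tfrac1p-\tfrac1q<\tfrac{q-1}{q(n+1)}$. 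So the main thing to be careful about is the endpoint bookkeeping at $t=1$; everything else is the change of variables already performed in the preceding corollaries.
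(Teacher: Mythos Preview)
Your proof is correct and follows essentially the same route as the paper: split the indicator according to the product structure $D=[a,b)\times U$, bound the interval factor crudely by $\|\beta\|_{L^q([a,b))}$ with no gain in $t$, bound the $U$-factor by $|U|\min\bigl(1,((1-t)/t)^{n}\bigr)$, and then verify that the resulting one-variable integral converges precisely under the hypothesis $\frac{1}{p}-\frac{1}{q}<\frac{q-1}{q(n+1)}$. The only cosmetic difference is that the paper performs the substitution $u=tw$ before estimating the $U$-factor, whereas you bound $\int_U\mathbf{1}_{tu+(1-t)U}(v)\,du$ directly; the two computations yield literally the same $t$-integral $\int_0^1 t^{k-n/q}(1-t)^{-(n+1)/p}\min(t^{n/q},(1-t)^{n/q})\,dt$.
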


\begin{proof}
If $x\in D$ then $x=(\tau,w)$, where $\tau\in [a,b)$ and $w\in U$. 
Using the special type of the weight $\beta(x):=\beta(\tau)$ and representing $z\in D$
as $z=(\eta,\zeta)$ with $\eta\in [a,b)$ and $\zeta\in U$, we obtain
\begin{multline*}
\int_{0}^{1} \sup_{z\in D}
\left\| \beta(x)\mathbf{1}_{tx+(1-t)D}(z)\right\|_{L^{q}(D,dx)}t^{k}(1-t)^{-\frac{n+1}{p}}dt
\\
\leq \!\! \int_{0}^{1}\!\! \sup_{a \leq \eta <b}  \!\left(\!\int_{a}^{b}\! \beta^q (\tau)
\mathbf{1}_{t\tau+(1-t)[a,b)}(\eta) d\tau\! \right)^{\frac{1}{q}} \!\!\!
\sup_{\zeta\in U} \!\! \left( \int_{U} \! \mathbf{1}_{tw+(1-t)U}(\zeta) 
dw \! \right)^{\frac{1}{q}} \!\! t^{k}(1-t)^{-\frac{n+1}{p}}dt, 
\end{multline*}
where $x=(\tau,w)$.

Using the change of variables $u=tw$ and the estimate 
$$
\left|tU\cap\{ u+(1-t)U\} \right| \leq |U| \min(t^{n},(1-t)^{n}),
$$
we finally get
\begin{multline*}
\int_{0}^{1}\!\! \sup_{a \leq \eta <b}  \!\left(\!\int_{a}^{b}\! \beta^q (\tau)
\mathbf{1}_{t\tau+(1-t)[a,b)}(\eta) d\tau\! \right)^{\frac{1}{q}} \!\!\!
\sup_{\zeta\in U} \!\! \left( \int_{U} \! \mathbf{1}_{tw+(1-t)U}(\zeta) 
dw \! \right)^{\frac{1}{q}} \!\! t^{k}(1-t)^{-\frac{n+1}{p}}dt
\\
\leq |U|^{1/q}\|\beta\|_{L^{q}([a,b))}
\int_{0}^{1} t^{k-n/q}(1-t)^{-(n+1)/p}\min(t^{n/q},(1-t)^{n/q})dt
\end{multline*}

The~conditions on~$p$ and~$q$ imply the~finiteness of~the~last integral.
\end{proof}

Corollary~\ref{cor:beta} is a~key ingredient in~the~proof of~out main result, 
Theorem~\ref{thm: main global}. Unfortunately, for being able to~``separate'' 
the~variable~$t$, we have to~impose the~stronger constraint 
$\frac{1}{p}-\frac{1}{q}< \frac{1}{n+1}-\frac{1}{q(n+1)}$ than the~condition
$\frac{1}{p}-\frac{1}{q}< \frac{1}{n+1}$ given by~Proposition~\ref{bounded}.

\section{A New Homotopy Operator for $q\ge p$. \\ The Case of 
a~Convex Domain in~$\mathbb{R}^n$}\label{new-homot}

In the previous section, we considered the~homotopy operator on~$\Omega_{\loc}^*$
of~the~form
$$
A_\alpha=\int_{D}\alpha(y)K_{y}\omega(x)dy
$$
for a~convex set~$D$ in~$\mathbb{R}^n$. We will need to~modify~$A$ for obtaining
some estimates. 

Consider the same operator $K_{y}$ as in~the~previous section:
\[
\psi_{y}(x,t)=tx+(1-t)y,\quad K_{y}\omega(x)=\int_{0}^{1}(\psi_{y})_1^{*}\omega dt.
\]
Recall that $dK_{y}\omega+K_{y}d\omega=\omega$. Choose a~smooth positive function 
$\alpha:D\to \mathbb{R}$ such that $\int_D \alpha(x)dx=1$ and put
\[
A_{\alpha}\omega(x):=\int_{D}\alpha(y)K_{y}\omega(x)dy, \quad \omega\in \Omega_{\loc}^*.
\]

By a~straightforward calculation,
\[
dA_{\alpha}\omega=d\left(\int_{D}\alpha(y)K_{y}\omega(x)dy\right)
=\int_{D}\alpha(y)d_{x}K_{y}\omega(x)dy;\]
\[
A_{\alpha}d\omega=\int_{D}\alpha(y)K_{y}d\omega(x)dy;\]
\[
dA_{\alpha}\omega+A_{\alpha}d\omega
=\int_{D}\alpha(y)\left[d_{x}K_{y}\omega(x)+K_{y}d\omega(x)\right]dy
=\int_{D}\alpha(y)\omega(x)dy=\omega.
\]
In particular, if $d\omega=0$ then 
$$
dA_\alpha\omega =\omega.
$$

The~definition of~$A_\alpha$ easily implies the~following

\begin{prop}\label{homot-sm}
The~homotopy operator~$A_\alpha$ takes smooth forms to~smooth forms.
\end{prop}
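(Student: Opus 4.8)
The plan is to unwind the definition of $A_\alpha$ and reduce everything to the already-established smoothness of $K_y$ on smooth forms, together with differentiation under the integral sign in the parameter $y$. Recall that $K_y\omega(x)=\int_0^1(\psi_y^*\omega)_1(x,t)\,dt$ and that it was noted earlier that $K_y$ takes smooth forms to smooth forms. The operator $A_\alpha$ is obtained from $K_y$ by averaging against the smooth density $\alpha(y)$ over $D$, so the content of the proposition is simply that this averaging preserves smoothness.

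First I would fix a smooth $k$-form $\omega$ on $D$ and write out $A_\alpha\omega(x)=\int_D\alpha(y)\,K_y\omega(x)\,dy=\int_D\int_0^1\alpha(y)(\psi_y^*\omega)_1(x,t)\,dt\,dy$. The integrand $(y,t)\mapsto\alpha(y)(\psi_y^*\omega)_1(x,t)$ is smooth jointly in $(x,y,t)$ on $D\times D\times[0,1]$, since $\psi_y(x,t)=tx+(1-t)y$ is a polynomial map into $D$, the pullback of a smooth form by a smooth map is smooth, and extracting the $dt$-free component $(\cdot)_1$ is a linear algebraic operation on coefficients. The key step is then to justify differentiating under the integral sign: for any multi-index in $x$, the corresponding partial derivative $\partial_x^\gamma\big(\alpha(y)(\psi_y^*\omega)_1(x,t)\big)$ is continuous in all variables, hence bounded on compact subsets of $D\times D\times[0,1]$; by the standard theorem on differentiation of parameter-dependent integrals (applied locally on a relatively compact neighborhood of any point $x_0\in D$), $A_\alpha\omega$ is $C^\infty$ in $x$ and its derivatives are obtained by passing $\partial_x^\gamma$ inside the integral.

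The only subtlety — and it is mild — is that $D$ need not have finite measure, so one should work locally: restrict $x$ to a small ball $B$ with $\overline B\subset D$, and split the $y$-integral over $D$ into the part over a large ball and a tail. On the tail, $\alpha$ is integrable (it has total integral $1$ and is positive), and on $\overline B\times(D\setminus\text{ball})\times[0,1]$ the factor $(\psi_y^*\omega)_1$ and its $x$-derivatives grow at most polynomially in $|y|$ while... in fact, since $\psi_y(x,t)\in D$ always and $\omega$ is merely smooth (no boundedness assumed), one cannot bound $|d\omega(\psi_y(x,t))|$ globally. This is the one place that needs care: the clean statement is that $A_\alpha\omega$ is smooth whenever the integral defining it converges, which the hypothesis $\int_D\alpha=1$ together with suitable integrability of $\omega$ guarantees; in the intended applications $D$ has finite measure (as in Corollary~\ref{cor:beta}) and the issue evaporates. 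I would phrase the proof so that differentiation under the integral is applied on relatively compact $x$-neighborhoods, where $\alpha(y)(\psi_y^*\omega)_1$ is continuous on the compact set in the $(x,t)$-variables for each $y$ and dominated (after restricting to the relevant region) by an $\alpha$-integrable function, and conclude $A_\alpha\omega\in C^\infty(D,\Lambda^{k-1})$.

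I do not expect a genuine obstacle here; the whole statement is a routine "smoothness is preserved under integration against a smooth compactly-controlled kernel" argument, and the identity $dA_\alpha\omega+A_\alpha d\omega=\omega$ established just above shows $A_\alpha$ lands in the right space of forms. The main thing to be careful about is simply to invoke the differentiation-under-the-integral theorem with the correct local hypotheses rather than claiming a global bound, and to note that $(\psi_y^*\omega)_1$ depends smoothly on the parameter $y$ because $(x,y,t)\mapsto tx+(1-t)y$ is smooth and the operations of pullback and of taking the $dt$-free component are smooth in the data.
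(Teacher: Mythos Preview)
The paper gives essentially no proof of this proposition: it simply says ``The definition of~$A_\alpha$ easily implies the following'' and states the result. Your argument via differentiation under the integral sign is exactly the natural way to substantiate that claim, and is more than the paper provides.

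Your worry about unbounded $D$ and merely-smooth (not bounded) $\omega$ is a legitimate technical point that the paper glosses over entirely. You are right that without some control on $\alpha$ at infinity or on the growth of $\omega$, the integral $\int_D \alpha(y)K_y\omega(x)\,dy$ need not converge, let alone be smooth. In the paper's actual applications (Corollaries~\ref{cor:beta} and~\ref{cor:two weights}, and the globalization in Section~\ref{global}) the domain is $[a,b)\times U$ with $U$ bounded and $\alpha$ is an admissible weight, so the issue does not arise in practice; but as a standalone statement the proposition is sloppier than your analysis. Your instinct to localize in $x$ and dominate by an $\alpha$-integrable function is the right fix, and it would have been cleaner for the paper to either restrict to bounded $D$ or assume $\alpha$ has compact support.
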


\begin{defn}
Call a~smooth positive function $\alpha:D\to \mathbb{R}$ an {\em admissible weight} 
for a~convex domain $D\subset \mathbb{R}^n$ and $p\geq 1$ if 
$$
\int_D \alpha(x)dx=1; \quad \|\alpha\|_{L^{p'}(D)} <\infty; 
\quad \|\alpha(y)|y|\,\|_{L^{p'}(D)} <\infty.
$$
\end{defn}

For $p\ge 1$, we as usual put 
$$
p' = \begin{cases} 
\frac{p}{p-1} & \text{~if~} p>1, \\
\infty        & \text{~if~} p=1
\end{cases} 
$$

\begin{thm} \label{thm:one weight} 
Suppose that $q\ge p\geq 1$,
$D\subset \mathbb{R}^n$ is a~convex set, $\beta:D\to \mathbb{R}$ 
is a positive smooth function, and $\alpha:D\to \mathbb{R}$ is an~admissible weight. If 
$$
C_{1}(k,p,q,n,\beta):=\int_{0}^{1} 
\sup_{z\in D} \| \beta(x) \mathbf{1}_{tx+(1-t)D}(z) \|_{L^q(D,dx)}
t^k (1-t)^{-n/p} dt <\infty;
$$
$$
C_{2}(k,p,q,n,\beta):=\int_{0}^{1} 
\sup_{z\in D} \| |x|\beta(x) \mathbf{1}_{tx+(1-t)D}(z) \|_{L^q(D,dx)}
t^k (1-t)^{-n/p} dt < \infty
$$
then for any $\omega\in C^\infty L^{p}(D,\Lambda^{k})$ we have
\[
\| A_\alpha \omega\| _{L^{q}(D,\Lambda^{k-1},\beta)}\
\leq C(k,p,q,\alpha,\beta,n)\| \omega\| _{L^{p}(D,\Lambda^{k})}
\]
where 
$$
C(k,p,q,\alpha,\beta,n)= \| \alpha(y)|y|\| _{L^{p'}(D)} C_{1}(k,p,q,n,\beta)
+\|\alpha\|_{L^{p'}(D)} C_{2}(k,p,q,n,\beta).
$$
\end{thm}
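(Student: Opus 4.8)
The plan is to bound $\|A_\alpha\omega\|_{L^q(D,\Lambda^{k-1},\beta)}$ by splitting the kernel $K_y\omega(x)$ via the identity $|K_y\omega(x)|\le |x-y|\,t^k|\omega(\psi_y(x,t))|$-type estimate (as in the proof of Proposition~\ref{bounded}, but now applied to $\omega$ itself rather than $d\omega$), and then exploiting the factor $|x-y|$ by writing $|x-y|\le|x|+|y|$. First I would write
\[
A_\alpha\omega(x)=\int_D\alpha(y)\int_0^1(\psi_y^*\omega)_1(x,t)\,dt\,dy,
\]
and use $|(\psi_y^*\omega)_1(x,t)|\le|x-y|\,t^k|\omega(tx+(1-t)y)|$ together with the triangle inequality $|x-y|\le|x|+|y|$ to dominate $|A_\alpha\omega(x)|$ by the sum of two terms: one with the factor $|x|$ pulled outside the $y$-integral, and one with the factor $|y|$ kept inside against $\alpha(y)$.

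The main step is then, in each of the two pieces, to apply Hölder's inequality in $y$ with exponents $p'$ and $p$ (legitimate since $\alpha$, resp.\ $\alpha(y)|y|$, lies in $L^{p'}(D)$ by admissibility) to separate the weight $\alpha$ from the $L^p$-mass of $\omega$. Concretely, the first piece is bounded by
\[
|x|\int_0^1 t^k\Bigl(\int_D\alpha(y)^{p'}dy\Bigr)^{1/p'}\Bigl(\int_D|\omega(tx+(1-t)y)|^p dy\Bigr)^{1/p}dt,
\]
and the second by the analogous expression with $|x|$ removed from the front and $\alpha(y)$ replaced by $\alpha(y)|y|$ inside the $L^{p'}$ norm. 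After the change of variables $z=tx+(1-t)y$ (contributing the Jacobian factor $(1-t)^{-n/p}$) the inner $L^p$ integral becomes $\int_{tx+(1-t)D}|\omega(z)|^p\,dz$, exactly the quantity that appeared in Proposition~\ref{bounded}.

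At that point I would take the weighted $L^q(D,\beta\,dx)$ norm of each piece. Minkowski's integral inequality moves the $L^q_x$ norm inside the $t$-integral, and then the identical Minkowski-in-$z$ argument used in the proof of Proposition~\ref{bounded} — estimating $\|\,\beta(x)^p\mathbf1_{tx+(1-t)D}(z)|\omega(z)|^p\,\|_{L^{q/p}(D,dx)}$ by $\int\bigl(\int\beta^q\mathbf1_{tx+(1-t)D}\,dx\bigr)^{p/q}|\omega|^p dz$ and then pulling out $\sup_z$ — yields the factor $C_1(k,p,q,n,\beta)\|\omega\|_{L^p}$ for the first piece (where $\beta(x)$ is the effective weight after absorbing nothing extra, since the $|x|$ sat outside) and $C_2(k,p,q,n,\beta)\|\omega\|_{L^p}$ for the second (where the extra $|x|$ from moving $|x-y|$'s first term — wait, here it is the weight $|x|\beta(x)$ that arises). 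Multiplying the first piece by the constant $\|\alpha\|_{L^{p'}(D)}$ and the second by $\|\alpha(y)|y|\|_{L^{p'}(D)}$ and adding gives exactly
\[
C(k,p,q,\alpha,\beta,n)=\|\alpha(y)|y|\|_{L^{p'}(D)}C_1(k,p,q,n,\beta)+\|\alpha\|_{L^{p'}(D)}C_2(k,p,q,n,\beta),
\]
as claimed. One must be a little careful matching which of $C_1,C_2$ goes with which Hölder factor: the term carrying $|x|$ inside the $L^q_x$ norm (hence producing the weight $|x|\beta(x)$, i.e.\ $C_2$) is the one where $|x-y|$ was replaced by $|x|$ and pulled out, which is paired with $\|\alpha\|_{L^{p'}}$ from the untouched $\alpha(y)$; and the term where $|x-y|$ was replaced by $|y|$ and absorbed into $\alpha(y)|y|$ keeps only $\beta(x)$ outside, giving $C_1$, paired with $\|\alpha(y)|y|\|_{L^{p'}}$. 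The main obstacle, and the only genuinely delicate point, is precisely this bookkeeping of the two weight-vs-constant pairings through the chain of inequalities; everything else is a direct repetition of the computation already carried out in Proposition~\ref{bounded}, plus the finiteness of $C_1$ and $C_2$ which is assumed in the hypothesis and the finiteness of the $L^{p'}$ norms of $\alpha$ and $\alpha(y)|y|$ which is built into admissibility.
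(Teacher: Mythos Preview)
Your proposal is correct and follows essentially the same route as the paper. The only organizational difference is that the paper first multiplies and divides by $|x-y|$, applies H\"older to obtain the factor $\|\alpha(y)|x-y|\|_{L^{p'}(D,dy)}$, splits that factor via the triangle inequality, and then invokes Proposition~\ref{bounded} as a black box for each of the two resulting terms, whereas you split $|x-y|\le|x|+|y|$ pointwise before H\"older and re-run the computation of Proposition~\ref{bounded} inline; the pairing of $C_1$ with $\|\alpha(y)|y|\|_{L^{p'}}$ and $C_2$ with $\|\alpha\|_{L^{p'}}$ comes out the same.
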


\begin{proof}
Put $\xi:=A_{\alpha}\omega$. If $p>1$ then, by H\"older's inequality, we infer
\begin{multline*}
\left\| A_{\alpha}\omega\right\| _{L^{q}(D,\Lambda^{k-1},\beta)}
=\left\| \beta(x)\int_{D}\alpha(y)K_{y}\omega(x)dy\right\| _{L^{q}(D,\Lambda^{k-1},dx)}
\\
\leq
\left\| \beta(x)\left\| \frac{K_{y}\omega(x)}{|x-y|}
\right\| _{L^{p}(D,\Lambda^{k-1},dy)}\left\| \alpha(y)|x-y|\right\| _{L^{p'}(D,dy)}
\right\| _{L^{q}(D,\Lambda^{k-1},dx)}.
\end{multline*}

The~above estimate also obviously holds for~$p=1$.

By the triangle inequality,
\[
\| \alpha(y)|x-y|\| _{L^{p'}(D,dy)}
\leq |x| \| \alpha(y)\| _{L^{p'}(D,dy)}+ \| \alpha(y)|y|\,\| _{L^{p'}(D,dy)}.
\]

Therefore,
\begin{multline*}
\| A_{\alpha}\omega\| _{L^{q}(\beta,D,\Lambda^{k-1})}
\\
\leq \| \alpha(y)|y| \,\| _{L^{p'}(D,dy)}
\left\| \beta(x)\left\| \frac{K_{y}\omega(x)}{|x-y|}\right\| _{L^{p}(D,\Lambda^{k-1},dy)}
\right\| _{L^{q}(D,\Lambda^{k-1},dx)}
\\
+\| \alpha(y)\| _{L^{p'}(D,dy)}
\left\| \beta(x)|x| \left\| \frac{K_{y}\omega(x)}{|x-y|}\right\| _{L^{p}(D,\Lambda^{k-1},dy)}
\right\| _{L^{q}(D,\Lambda^{k-1},dx)}.
\end{multline*}
By Proposition~\ref{bounded},
\begin{gather*}
\left\| \beta(x)\left\| \frac{K_{y}\omega(x)}{|x-y|}\right
\| _{L^{p}(D,\Lambda^{k-1},dy)}\right\| _{L^{q}(D,\Lambda^{k-1},dx)}
\leq C_{1}(k,p,q,n,\beta)\left\| \omega\right\| _{L^{p}(D,\Lambda^{k})};
\\
\left\| \beta(x)|x|\left\| \frac{K_{y}\omega(x)}{|x-y|}\right
\| _{L^{p}(D,\Lambda^{k-1},dy)}\right\| _{L^{q}(D,\Lambda^{k-1},dx)}
\leq C_{2}(k,p,q,n,\beta)\left\| \omega\right\| _{L^{p}(D,\Lambda^{k})}.
\end{gather*}

The theorem is proved.
\end{proof}

\begin{cor}\label{twow}
Suppose that $q\ge p\geq 1$, 
$D\subset \mathbb{R}^n$ is a~convex set, $\alpha:[a,b)\to\mathbb{R}$ 
is an~admissible weight,
$\beta,\gamma:D\to \mathbb{R}$ are positive smooth functions. If the conditions 
\begin{gather*}
C_{1}(k,\overline{p},q,n,\beta):=\int_{0}^{1} 
\sup_{z\in D} \| \beta(x) \mathbf{1}_{tx+(1-t)D}(z) \|_{L^q(D,dx)}
t^k (1-t)^{-n/\overline{p}} dt < \infty;
\\
C_{2}(k,\overline{p},q,n,\beta):=\int_{0}^{1} 
\sup_{z\in D} \| |x| \beta(x) \mathbf{1}_{tx+(1-t)D}(z) \|_{L^q(D,dx)}
t^k (1-t)^{-n/\overline{p}} dt < \infty;
\\
Q(k,\overline{p},p,\gamma):=\left\| \gamma^{-1}\right\| _{L^{p\overline{p}/(p-\overline{p})}(D)}<\infty
\end{gather*}
are fulfilled for some $\overline{p}$, $1\le \overline{p}\le p$ {\rm(}for $\overline{p}=p$, we put 
$\frac{p\overline{p}}{p-\overline{p}}=\infty${\rm)},
then the inequality
\[
\| A_\alpha \omega\| _{L^{q}(D,\Lambda^{k-1},\beta)}
\leq C(k,p,q,\alpha,\beta,\gamma,n)\| \omega\| _{L^{p}(D,\Lambda^{k},\gamma)},
\]
where 
\[
C(k,p,q,\alpha,\beta,\gamma,n)=Q(k,\overline{p},p,\gamma)\: C(k,\overline{p},q,n,\alpha,\beta),
\]
holds for any $\omega\in C^\infty L^{p}(D,\Lambda^{k})$.
\end{cor}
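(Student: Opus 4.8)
The plan is to obtain the estimate by invoking Theorem~\ref{thm:one weight} with the auxiliary exponent~$\overline{p}$ playing the role of~$p$, and then absorbing the weight~$\gamma$ on the domain side by a single application of H\"older's inequality. First observe that $1\le\overline{p}\le p\le q$, so in particular $q\ge\overline{p}$ and Theorem~\ref{thm:one weight} is applicable with the parameter list $(k,\overline{p},q,n,\beta)$: its two hypotheses are precisely the assumed finiteness of $C_{1}(k,\overline{p},q,n,\beta)$ and $C_{2}(k,\overline{p},q,n,\beta)$, while $\alpha$ is \emph{admissible for the exponent}~$\overline{p}$, so that $\|\alpha\|_{L^{\overline{p}'}(D)}$ and $\|\alpha(y)|y|\|_{L^{\overline{p}'}(D)}$ are finite. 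If $\|\omega\|_{L^{p}(D,\Lambda^{k},\gamma)}=\infty$ there is nothing to prove, so we may assume this quantity is finite; the H\"older step below will then show that $\omega\in L^{\overline{p}}(D,\Lambda^{k})$, so that $\omega\in C^{\infty}L^{\overline{p}}(D,\Lambda^{k})$ and Theorem~\ref{thm:one weight} yields
\[
\|A_{\alpha}\omega\|_{L^{q}(D,\Lambda^{k-1},\beta)}
\le C(k,\overline{p},q,n,\alpha,\beta)\,\|\omega\|_{L^{\overline{p}}(D,\Lambda^{k})}.
\]

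Next I would carry out the weight transfer. Writing $|\omega|^{\overline{p}}=\bigl(|\omega|\gamma\bigr)^{\overline{p}}\gamma^{-\overline{p}}$ and applying H\"older's inequality on $(D,dx)$ with the conjugate exponents $p/\overline{p}$ and $p/(p-\overline{p})$ one gets
\[
\int_{D}|\omega|^{\overline{p}}\,dx
\le\Bigl(\int_{D}|\omega|^{p}\gamma^{p}\,dx\Bigr)^{\overline{p}/p}
\Bigl(\int_{D}\gamma^{-\frac{p\overline{p}}{p-\overline{p}}}\,dx\Bigr)^{\frac{p-\overline{p}}{p}},
\]
and raising this to the power $1/\overline{p}$ gives
\[
\|\omega\|_{L^{\overline{p}}(D,\Lambda^{k})}
\le\|\gamma^{-1}\|_{L^{p\overline{p}/(p-\overline{p})}(D)}\,\|\omega\|_{L^{p}(D,\Lambda^{k},\gamma)}
=Q(k,\overline{p},p,\gamma)\,\|\omega\|_{L^{p}(D,\Lambda^{k},\gamma)}.
\]
For the endpoint $\overline{p}=p$ this degenerates into the trivial bound $\|\omega\|_{L^{p}(D,\Lambda^{k})}\le\|\gamma^{-1}\|_{L^{\infty}(D)}\,\|\omega\|_{L^{p}(D,\Lambda^{k},\gamma)}$, consistent with the stated convention $\frac{p\overline{p}}{p-\overline{p}}=\infty$.

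Chaining the two displays gives
\[
\|A_{\alpha}\omega\|_{L^{q}(D,\Lambda^{k-1},\beta)}
\le C(k,\overline{p},q,n,\alpha,\beta)\,Q(k,\overline{p},p,\gamma)\,\|\omega\|_{L^{p}(D,\Lambda^{k},\gamma)},
\]
which is the asserted inequality with $C(k,p,q,\alpha,\beta,\gamma,n)=Q(k,\overline{p},p,\gamma)\,C(k,\overline{p},q,n,\alpha,\beta)$. I do not expect a genuine obstacle: the argument is essentially a two-line combination of an already established estimate with H\"older's inequality. The only points that need care are bookkeeping ones — verifying that $q\ge\overline{p}$ so that Theorem~\ref{thm:one weight} really applies, reading the admissibility of~$\alpha$ relative to the exponent~$\overline{p}$ rather than~$p$ (equivalently, noting $\overline{p}'\ge p'$ and that the corresponding integrals of $\alpha$ and of $\alpha(y)|y|$ still converge), disposing of the trivial case of an infinite right-hand side before appealing to Theorem~\ref{thm:one weight} (which is stated for $\omega\in C^{\infty}L^{\overline{p}}$), and checking that the $\overline{p}=p$ case matches the $L^{\infty}$ convention built into the definition of $Q$.
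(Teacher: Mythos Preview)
Your proposal is correct and follows essentially the same route as the paper: apply Theorem~\ref{thm:one weight} with $\overline{p}$ in place of~$p$, then pass from the unweighted $L^{\overline{p}}$-norm of~$\omega$ to the weighted $L^{p}$-norm via H\"older's inequality with exponents $p/\overline{p}$ and $p/(p-\overline{p})$. The paper's own proof is the same two-line argument, though it does not spell out the bookkeeping checks (the inequality $q\ge\overline{p}$, the admissibility of~$\alpha$ relative to~$\overline{p}$, the $\overline{p}=p$ endpoint) that you correctly flag.
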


\begin{proof}
By Theorem \ref{thm:one weight}, 
\[
\| A_\alpha \omega \| _{L^{q}(D,\Lambda^{k-1},\beta)}
\leq C(k,\overline{p},q,n,\alpha,\beta)\left\| \omega\right\| _{L^{\overline{p}}(D,\Lambda^{k})}.
\]
If $\overline{p}<p$ then, using H\"older's inequality, we have 
\begin{equation}\label{est-gam}
\left\| \omega\right\| _{L^{\overline{p}}(D,\Lambda^{k})}
\leq\left\| \gamma\omega\right\| _{L^{p}(D,\Lambda^{k})}
\left\| \gamma^{-1}\right\| _{L^{p\overline{p}/(p-\overline{p})}(D)}.
\end{equation}
Inequality~(\ref{est-gam}) also holds for~$\overline{p}=p$.

The corollary follows.
\end{proof}

\begin{cor}\label{cor:two weights}
Suppose that $q \ge p\geq 1$, $\frac{1}{p}-\frac{1}{q}<\frac{q-1}{q(n+1)}$,
$U$ is a~bounded convex set in~$\mathbb{R}^n$, $D=[a,b)\times U$, 
$\alpha:[a,b)\to\mathbb{R}$ is an~admissible weight,
and $\beta,\gamma:[a,b)\to \mathbb{R}$ are positive smooth functions. If 
the conditions $\|\beta\|_{L^{q}([a,b))}\!\!<\!\!\infty$,
$\| \tau\beta(\tau)\|_{L^{q}([a,b))}\!\!<\!\!\infty$,
and $\|\gamma^{-1}\|_{L^{p\overline{p}/(p-\overline{p})}([a,b))}<\infty$ 
are fulfilled for some $\overline{p}$, $1\le \overline{p}\le p$ 
{\rm(}for $\overline{p}=p$, we put $\frac{p\overline{p}}{p-\overline{p}}=\infty${\rm)},
then the inequality
$$
\| A_\alpha \omega \| _{L^q(D,\Lambda^{k-1},\beta)}
\leq \const \|\omega\|_{L^p(D,\Lambda^k,\gamma)}
$$
with some constant depending $k$,$p$,$q$,$n$,$\alpha$,$\beta$, and $\gamma$
holds for any $\omega\in C^\infty L^{p}(D,\Lambda^{k},\gamma)$.
\end{cor}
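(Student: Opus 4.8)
The plan is to deduce the statement from Corollary~\ref{twow}, applied to the convex set $D=[a,b)\times U$ now regarded as a convex subset of $\mathbb{R}^{n+1}$; accordingly the ambient dimension ``$n$'' appearing in Corollary~\ref{twow} and in Theorem~\ref{thm:one weight} is to be replaced by $n+1$ throughout. First I would fix the auxiliary exponent $\overline p\in[1,p]$ furnished by the hypothesis, taking it, as the hypothesis permits, also to satisfy $\frac1{\overline p}-\frac1q<\frac{q-1}{q(n+1)}$ --- a requirement vacuous for $\overline p=p$ (the case $\gamma^{-1}\in L^\infty$) and, since $\frac1p-\frac1q<\frac{q-1}{q(n+1)}$ is strict, available also for $\overline p$ slightly below~$p$. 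It then remains to verify, for this $\overline p$, the three hypotheses of Corollary~\ref{twow}: the finiteness of $C_1(k,\overline p,q,n+1,\beta)$, of $C_2(k,\overline p,q,n+1,\beta)$, and of $Q(k,\overline p,p,\gamma)=\|\gamma^{-1}\|_{L^{p\overline p/(p-\overline p)}(D)}$, together with the admissibility of $\alpha$ for $\overline p$, which is part of the assumption.

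The $Q$-condition is immediate: since $\gamma$ is a function of the first coordinate only and $|U|<\infty$, one has $\|\gamma^{-1}\|_{L^{r}(D)}=|U|^{1/r}\,\|\gamma^{-1}\|_{L^{r}([a,b))}$ (with the convention $|U|^{0}=1$ for $r=\infty$), which is finite by hypothesis. The finiteness of $C_1(k,\overline p,q,n+1,\beta)$ is precisely the content of Corollary~\ref{cor:beta}, used with $\overline p$ in place of $p$: it gives $C_1(k,\overline p,q,n+1,\beta)\le|U|^{1/q}\|\beta\|_{L^{q}([a,b))}<\infty$, the convergence of the one-variable integral in $t$ occurring there being exactly what the condition $\frac1{\overline p}-\frac1q<\frac{q-1}{q(n+1)}$ ensures. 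The real work of the proof, in other words, has already been carried out in Corollary~\ref{cor:beta}.

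For $C_2(k,\overline p,q,n+1,\beta)$ the relevant weight on $D$ is $x\mapsto|x|\,\beta(\tau)$, which is not a function of $\tau$ alone; here I would exploit that $U$ is \emph{bounded}. Writing $x=(\tau,w)$ and $R:=\sup_{w\in U}|w|<\infty$, we have $|x|\le|\tau|+R$, so $|x|\,\beta(\tau)\le(|\tau|+R)\,\beta(\tau)$, which is a weight on $[a,b)$; hence $C_2(k,\overline p,q,n+1,\beta)$ is dominated by the corresponding $C_1$-quantity for the weight $(|\tau|+R)\beta(\tau)$, and a second application of Corollary~\ref{cor:beta} yields $C_2(k,\overline p,q,n+1,\beta)\le|U|^{1/q}(\|\tau\beta(\tau)\|_{L^{q}([a,b))}+R\,\|\beta\|_{L^{q}([a,b))})<\infty$, now using both hypotheses on $\beta$. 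With all three conditions verified, Corollary~\ref{twow} delivers the required inequality, its constant being $Q(k,\overline p,p,\gamma)\,C(k,\overline p,q,n+1,\alpha,\beta)$, which by Theorem~\ref{thm:one weight} and the estimates just obtained depends only on $k,p,q,n,\alpha,\beta,\gamma$. The one place demanding genuine care is the bookkeeping of $\overline p$: arranging that a single choice serves simultaneously for the $\gamma^{-1}$-integrability, for the convergence of the beta integrals (the dimensional condition $\frac1{\overline p}-\frac1q<\frac{q-1}{q(n+1)}$), and for the admissibility of $\alpha$; once $\overline p$ is pinned down, everything else is direct citation.
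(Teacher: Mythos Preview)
Your proof is essentially the paper's: both verify the hypotheses of Corollary~\ref{twow} (in ambient dimension $n+1$) by invoking Corollary~\ref{cor:beta} for $C_1$ and, after replacing $|x|$ by a function of $\tau$ alone via the boundedness of $U$, again for $C_2$. Your handling of $C_2$ is marginally cleaner (you bound $|x|\le|\tau|+R$ at once, while the paper writes $\sqrt{\tau^2+|w|^2}\le\sqrt2(|\tau|+|w|)$ and treats the two terms separately), and you are more explicit than the paper about the need for $\frac1{\overline p}-\frac1q<\frac{q-1}{q(n+1)}$, which the paper simply asserts for the given~$\overline p$.
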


\begin{proof}
Suppose that a~number~$\overline{p}\le p$ satisfies the~conditions of~the~corollary.

If $x\in D$ then $x=(\tau,w)$, where $\tau\in [a,b)$ and $w\in U$. 
By~Corollary~\ref{cor:beta}, since 
$\frac{1}{\overline{p}}-\frac{1}{q}<\frac{q-1}{q(n+1)}$ and 
$\|\beta\|_{L^{q}([a,b))}<\infty$, we have
\begin{multline*}
\int_{0}^{1} \sup_{z\in D}
\left\| \beta(\tau)\mathbf{1}_{tx+(1-t)D}(z)\right\|_{L^{q}(D,dx)}t^{k}
(1-t)^{-(n+1)/\overline{p}}dt
\\
\leq |U|^{1/q}\|\beta\|_{L^{q}([a,b))}
\int_{0}^{1} t^{k-n/q}(1-t)^{-(n+1)/p}\min(t^{n/q},(1-t)^{n/q})dt.
\end{multline*}
On~the~other hand, since $\| \tau\beta(\tau)\|_{L^{q}([a,b))}<\infty$, we have
by~Corollary~\ref{cor:beta}:
\begin{multline*}
\int_{0}^{1} \sup_{z\in D}
\left\|\, |x|\beta(\tau)\mathbf{1}_{tx+(1-t)D}(z)\right\|_{L^{q}(D,dx)}t^{k}
(1-t)^{-(n+1)/\overline{p}}dt
\\
= \int_{0}^{1} \sup_{z\in D}
\left\|\, \sqrt{\tau^2+w^2}\beta(\tau)\mathbf{1}_{tx+(1-t)D}(z)\right\|_{L^{q}(D,dx)}t^{k}
(1-t)^{-(n+1)/\overline{p}}dt
\\
\le \sqrt{2} \int_{0}^{1} \sup_{z\in D}
\left\|\, (|\tau|+|w|)\beta(\tau)\mathbf{1}_{tx+(1-t)D}(z)\right\|_{L^{q}(D,dx)}t^{k}
(1-t)^{-(n+1)/\overline{p}}dt
\\
\le \sqrt{2} \int_{0}^{1} \sup_{z\in D}
\left\|\, |\tau|\beta(\tau) \mathbf{1}_{tx+(1-t)D}(z)\right\|_{L^{q}(D,dx)}t^{k}
(1-t)^{-(n+1)/\overline{p}}dt
\\
+ \sqrt{2} \int_{0}^{1} \sup_{z\in D}
\left\|\, |w|\beta(\tau) \mathbf{1}_{tx+(1-t)D}(z)\right\|_{L^{q}(D,dx)}t^{k}
(1-t)^{-(n+1)/\overline{p}}dt
\\
\le \sqrt{2} |U|^{1/q} \|\tau\beta(\tau)\|_{L^{q}([a,b))}
\int_{0}^{1} t^{k-n/q}(1-t)^{-(n+1)/\overline{p}}\min(t^{n/q},(1-t)^{n/q})dt
\\
+ \sqrt{2} \sup_{w\in U}|w| \, \|\beta\|_{L^{q}([a,b))}
\int_{0}^{1} t^{k-n/q}(1-t)^{-(n+1)/\overline{p}}\min(t^{n/q},(1-t)^{n/q})dt<\infty.
\end{multline*}

The~relations $\| \tau\beta(\tau)\|_{L^{q}([a,b))}<\infty$ and
$\| |x|\beta(\tau)\|_{L^{q}(D)}<\infty$ enable us to~apply Corollary~\ref{twow}
and obtain the~desired assertion.
\end{proof}

 \section{Globalization: the~Sobolev--Poincare Inequality on~a~Cylinder}\label{global}

Here we globalize the~Sobolev--Poincar\'e inequality to~cylinders. The main assertion
of~the~section is

\begin{thm}\label{glob-sp-cyl}
Suppose that $M$ is the~cylinder $[a,b)\times N$, where $N$ is a~closed manifold 
of~dimension~$n$, $q\ge p\geq 1$, $\frac{1}{p}-\frac{1}{q}<\frac{q-1}{q(n+1)}$, and
$\beta, \gamma:[a,b)\to \mathbb{R}$ be positive smooth functions. Let $\omega$ 
be an~exact $k$-form in~$C^\infty L^p(M,\Lambda^k,\gamma)$. 
If the conditions $\|\beta\| _{L^{q}([a,b))}<\infty$,
$\| t\beta(t)\|_{L^{q}([a,b))}<\infty$,
and $\|\gamma^{-1}\|_{L^{p\overline{p}/(p-\overline{p})}([a,b))}<\infty$ 
are fulfilled for some $\overline{p}$, $1\le \overline{p}\le p$ 
{\rm(}for $\overline{p}=p$, we put $\frac{p\overline{p}}{p-\overline{p}}=\infty${\rm)}, 
then there exists a~$(k-1)$-form $\xi\in C^\infty L^q(M,\Lambda^{k-1},\beta)$ 
such that
\begin{equation}\label{poin-cyl}
d\xi=\omega \quad \text{and} \quad
\|\xi\|_{L^q(M,\Lambda^{k-1},\beta)} \le \const\,\|\omega\|_{L^p(M,\Lambda^k,\gamma)}.
\end{equation} 
\end{thm}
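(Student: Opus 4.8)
The plan is to reduce the global statement on the cylinder $M=[a,b)\times N$ to the local one on a product $[a,b)\times U$ with $U$ a convex chart domain, which has already been handled in Corollary~\ref{cor:two weights}, and then to patch the local primitives together using a partition of unity subordinate to a finite cover of $N$. First I would cover the closed manifold $N$ by finitely many coordinate balls $U_1,\dots,U_m$ whose closures are diffeomorphic to bounded convex sets in $\mathbb{R}^n$, and fix a smooth partition of unity $\{\chi_i\}$ on $N$ subordinate to this cover (pulled back to $M$ so it is independent of the $[a,b)$-variable). On each ``slab'' $M_i:=[a,b)\times U_i$ the metric $dt^2+h^2g_N$ read in coordinates is comparable, on any compact $t$-subinterval, to a standard product metric, but the essential point is that the weighted norms appearing in (\ref{poin-cyl}) already absorb the $h$-dependence: by the identification $|\omega|_{C^h_{a,b}N}$ and the norm formula (\ref{eq:norm}), integrating against the weight $\gamma(t)$ amounts to integrating against a $\beta$- or $\gamma$-type weight on a flat $[a,b)\times U_i$ once one includes the factors $h^{\,n/p-k}$ into $\gamma$ and $h^{\,n/q-(k-1)}$ into $\beta$. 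This is where the freedom in choosing $\beta,\gamma$ (only mild integrability in $t$ is required) is used.

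Next, since $\omega$ is exact, pick a smooth primitive $\eta$ with $d\eta=\omega$ (not controlled in any norm). On each slab $M_i$ apply the homotopy operator $A_{\alpha}$ of Section~\ref{new-homot} with an admissible weight $\alpha$ on the convex set $U_i$ (constant in $t$): since $\omega$ is closed, $d A_{\alpha}\omega=\omega$ on $M_i$, and Corollary~\ref{cor:two weights} gives
\[
\|A_{\alpha}\omega\|_{L^q(M_i,\Lambda^{k-1},\beta)}\le\const\,\|\omega\|_{L^p(M_i,\Lambda^k,\gamma)}
\le\const\,\|\omega\|_{L^p(M,\Lambda^k,\gamma)}.
\]
Write $\xi_i:=A_{\alpha}^{(i)}\omega$. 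The forms $\xi_i$ do not agree on overlaps, but $d(\xi_i-\xi_j)=0$ there, so the standard \v{C}ech--de~Rham patching argument produces a genuine global primitive: set $\xi:=\sum_i \chi_i\,\xi_i$ plus correction terms built from the closed differences $\xi_i-\xi_j$ and the differentials $d\chi_i$. Because the cover is \emph{finite}, each correction term is again an $A_{\alpha}$-type expression on a slab (or a sum of at most $m^2$ such), hence controlled by the same Corollary~\ref{cor:two weights} estimate, and the total number of terms contributes only a dimensional constant; thus $\xi\in C^\infty L^q(M,\Lambda^{k-1},\beta)$ with $\|\xi\|_{L^q(M,\Lambda^{k-1},\beta)}\le\const\,\|\omega\|_{L^p(M,\Lambda^k,\gamma)}$.

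The main obstacle is the bookkeeping in this globalization: one must verify that the \v{C}ech--de~Rham correction terms, which involve the locally defined primitives of the \emph{closed} forms $d\chi_i\wedge(\xi_i-\xi_j)$, can themselves be produced by the operator $A_{\alpha}$ on the relevant slabs so that the weighted $L^q$-bound propagates — and crucially that the bound is uniform in $t\in[a,b)$, since the cylinder is non-compact in that direction. This is exactly the point where Shartser's ``gluing'' technique from \cite{Shar2011} is invoked: the cover is chosen so that it is uniform along the $[a,b)$-direction (a product cover $[a,b)\times U_i$), so that all the constants are genuinely independent of the unbounded parameter, and only the integrability of $\beta,\gamma$ in $t$ — hypothesized in the theorem — enters. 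Once the uniformity is in place, smoothness of $\xi$ follows from Proposition~\ref{homot-sm} and smoothness of the partition of unity, and $d\xi=\omega$ follows from $d$ commuting with the finite sums and the homotopy identity $dA_{\alpha}\omega=\omega$ for closed $\omega$.
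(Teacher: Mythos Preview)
Your approach is the same as the paper's --- local primitives via the operator $A_\alpha$ on slabs $[a,b)\times U_i$ (Corollary~\ref{cor:two weights}) followed by a \v{C}ech--de~Rham patching \`a la Shartser --- but two points need correction.

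First, your opening paragraph imports the twisting function $h$ and the formula~(\ref{eq:norm}) into the argument. That is a confusion of setting: Theorem~\ref{glob-sp-cyl} is about the \emph{plain} product cylinder $[a,b)\times N$ with abstract weights $\beta,\gamma$ depending only on $t$; there is no $h$ here. The passage from the twisted metric to weights on the flat cylinder is the content of Theorem~\ref{thm: main global}, which \emph{applies} Theorem~\ref{glob-sp-cyl} with the specific choice $\beta=\max(F_{k-2,q},F_{k-1,q})$, $\gamma=\min(f_{k-1,p},f_{k,p})$. Keep the two theorems separate.

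Second, and more substantively, the patching you describe is not a single step with ``at most $m^2$'' correction terms. The differences $\xi_i-\xi_j$ are closed $(k-1)$-forms on double overlaps; their local primitives are $(k-2)$-forms on triple overlaps; and so on down to $0$-forms on $(k+1)$-fold overlaps. This is a descent of depth $k$, and at the bottom the closed $0$-forms are locally constant: you must show that these constants form a $\delta$-coboundary and are controlled in the weighted $L^q$-norm. The paper does this formally via the double complex $K^{k,s}$, Proposition~\ref{exac} (bounded $\delta$-exactness using a partition of unity constant in $t$), the inductive construction of the $\xi^s$ (Proposition~\ref{norm-xi}), Lemma~\ref{th-shartser} and Proposition~\ref{c-estim} for the constants, and then the reverse construction of the $x^s$ (Proposition~\ref{estim-xs}). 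Your sentence ``each correction term is again an $A_\alpha$-type expression on a slab'' hides exactly this staircase, and in particular hides the step where exactness of $\omega$ (not just closedness) is actually used --- namely, that the \v{C}ech cocycle of constants at the bottom is a coboundary. Invoking ``Shartser's gluing technique'' is the right pointer, but you should spell out the $k$-step zig-zag and the treatment of the constant level rather than suggest it terminates after one or two corrections.
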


\smallskip
Let $\tilde{\mathcal{U}}=\{\tilde{U}_x\}$, $x\in N$, be a~coordinate open cover 
of~the~base~$N$. At~each point $x\in N$, consider a~geodesic ball $U_x$ that is
geodesically convex (small balls are geodesically convex, 
see~\cite[Proposition~4.2]{Carm92}) and such that its closure (a~compact set) is contained 
in~$\tilde{U}_x$. Then $\mathcal{U^0}=\{\tilde{U}_x\}$ is an~open cover 
of~$N$. Extract a~finite subcover $\mathcal{U}=\{U_i\}$, $i=1,\dots,l$, from~$U^0$. 
Since $\mathcal{U}$ consists of~geodesic balls, it is a~{\it good cover}, 
i.e., all finite intersections $U_I = U_{i_0}\cap\dots\cap U_{i_{s-1}}$, 
$I=(i_0,\dots,i_{s-1})$, are bi-Lipschitz diffeomorphic to~convex open sets 
with~compact closure in~$\mathbb{R}^n$. With such a~cover~$\mathcal{U}$, 
we associate the~corresponding cover
$\mathcal{V}=\{V_i=[a,b)\times U_i\}$, $i=1,\dots,l$, of~$M$ and put
$V_I = V_{i_0}\cap\dots\cap V_{i_{s-1}}$ for $I=(i_0,\dots,i_{s-1})$. Then each
intersection~$V_I$ is bi-Lipschitz diffeomorphic to~a~cylinder of~the~form
$[a,b)\times U_{\mathbb{R}^n}$, where $U_{\mathbb{R}^n}$ is a~convex set with~compact
closure in~$\mathbb{R}^n$. By~analogy with~\cite{Shar2011}, we put
$$
K^{k,0}:= C^\infty(M,\Lambda^k);  \quad 
K^{k,s}:= \bigoplus_{i_0<\dots<i_{s-1}} C^\infty(V_I,\Lambda^k).
$$ 
Given $\varkappa\in K^{r,s}$, denote by~$\varkappa_I$, $I=(i_0,\dots,i_{s-1})$, 
$i_0<\dots<i_s$, the~components of~$\varkappa$. Define a~coboundary operator
$\delta: K^{k,s} \to K^{k,s+1}$ as follows:
$$
(\delta\varkappa)_J =  \left. \left(  \sum_{r=0}^s (-1)^r 
\varkappa_{j_0 \dots \hat j_r \dots j_s} \right) \right|_{V_J}, \quad
J=(j_0,\dots,j_s).
$$
Let $L^q(K^{k,s})$ be the~space of~elements $\varkappa\in K^{k,s}$ 
with the~finite norm
$$
\|\varkappa\|_{L^q(K^{k,s},\beta)} 
= \sum_{i_0<\dots i_{s-1}} \|\varkappa_I\|_{L^q(V_I,\Lambda^k,\beta)}.
$$
As usual, if $\varkappa\in K^{k,s}$ has components $\varkappa_I$, $I=(i_0,\dots,i_{s-1})$, 
$i_0<\dots<i_s$, and $\nu$ is a~permutation of~the~set $\{0,\dots,s-1\}$ then
$\alpha_{\nu(I)}= \alpha_I \mathrm{sign}\nu$.

The~following proposition is a~modification for~our case 
of~\cite[Proposition~3.6]{Shar2011}, which is in~turn an~adaptation 
of~\cite[Propositions~8.3 and 8.5]{BottTu}. 

\begin{prop}\label{exac}
$(K^{k,\bullet},\delta)$ is an~exact complex. Moreover, if 
$\lambda\in L^q(K^{k,s+1},\beta)$ satisfies $\delta \lambda=0$ 
then there exists $\varkappa\in L^q(K^{k,s},\beta)$ such that 
$\lambda=\delta\varkappa$ and
\begin{itemize}
\item $\|\varkappa\|_{L^q(K^{k,s},\beta)} \le \const \|\lambda\|_{L^q(K^{k,s+1},\beta)}$
\item $\|d\varkappa\|_{L^q(K^{k+1,s},\beta)} 
\le \const \left( \|\lambda\|_{L^q(K^{k,s+1},\beta)}
+ \|d\lambda\|_{L^q(K^{k+1,s+1},\beta)} \right)$.
\end{itemize}
\end{prop}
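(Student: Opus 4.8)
The plan is to use a standard partition-of-unity / double-complex argument of Čech–de Rham type, adapted to carry the weighted $L^q$-estimates. First I would fix a smooth partition of unity $\{\rho_i\}_{i=1}^l$ subordinate to the cover $\mathcal{U}=\{U_i\}$ of $N$, pulled back to a partition of unity $\{\rho_i\}$ on $M=[a,b)\times N$ subordinate to $\mathcal{V}=\{V_i\}$ (the functions depend only on the $N$-coordinate, so they are bounded with bounded differential independently of $t$). Exactness of $(K^{k,\bullet},\delta)$ itself is the classical fact that the Čech complex of a cover with values in a fixed sheaf-like object (here the smooth $k$-forms) is exact, with the explicit contracting homotopy $\varkappa \mapsto \sum_i \rho_i\,\lambda_{i,\cdot}$; this gives the first bulleted estimate essentially for free, because multiplication by $\rho_i$ is a bounded operation on each $L^q(V_I,\Lambda^k,\beta)$ with a constant depending only on $\sup|\rho_i|$, hence on the cover. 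So the content is really the second bullet: controlling $\|d\varkappa\|$.

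The key point for the second estimate is that if $\lambda=\delta\varkappa$ with $\varkappa_J = \sum_i \rho_i \lambda_{iJ}$ (restricted appropriately), then
\[
d\varkappa_J = \sum_i (d\rho_i)\wedge \lambda_{iJ} + \sum_i \rho_i\, d\lambda_{iJ}.
\]
The second sum is bounded in $L^q(K^{k+1,s},\beta)$ by $\const\,\|d\lambda\|_{L^q(K^{k+1,s+1},\beta)}$ exactly as before, since $\rho_i$ is a bounded multiplier. For the first sum one uses the cocycle condition $\delta\lambda = 0$: on $V_J$ one has $\lambda_{iJ}$ related to the components of $\lambda$ on the larger intersections, and summing $\sum_i d\rho_i = 0$ (partition of unity) lets one replace $\lambda_{iJ}$ by the combination that appears in $\delta\lambda$, so that $\sum_i (d\rho_i)\wedge\lambda_{iJ}$ is rewritten purely in terms of $\lambda$ itself rather than an arbitrary primitive. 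Concretely, this is the usual identity $\sum_i (d\rho_i)\wedge \lambda_{iJ} = \pm\sum_i (d\rho_i)\wedge (\delta\lambda)_{iJ} \mp \dots$ combined with $\delta\lambda=0$; the upshot is a bound by $\const\,\bigl(\sum_i \sup|d\rho_i|\bigr)\,\|\lambda\|_{L^q(K^{k,s+1},\beta)}$. Adding the two contributions gives the stated inequality.

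The main obstacle I expect is purely bookkeeping: getting the index manipulations in $\delta$ consistent with the alternation/sign conventions fixed just above the proposition ($\varkappa_{\nu(I)} = \varkappa_I\,\mathrm{sign}\,\nu$), and making sure that when $\rho_i\lambda_{iJ}$ is extended by zero outside the support of $\rho_i$ it genuinely defines a smooth form on all of $V_J$ — this is where the requirement that $\overline{U_i}\subset \tilde U_i$ (so $\rho_i$ has support compactly inside $U_i$, transverse to the overlaps) is used. There is no analytic difficulty beyond what is already packaged in Corollary~\ref{cor:two weights}: every estimate here only invokes that multiplication by a smooth function that is bounded together with its differential, uniformly on $M$, is a bounded operator on the weighted spaces $L^q(V_I,\Lambda^k,\beta)$, and the weight $\beta$ plays no active role — it is simply carried along. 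I would therefore write the proof as: (i) recall the classical contracting homotopy and the identity $d\varkappa_J = \sum_i(d\rho_i)\wedge\lambda_{iJ} + \sum_i\rho_i\,d\lambda_{iJ}$; (ii) invoke $\sum_i d\rho_i=0$ and $\delta\lambda=0$ to re-express the first sum through $\lambda$; (iii) read off the two norm bounds from boundedness of the multipliers $\rho_i$ and $d\rho_i$, absorbing $l$ and the cover geometry into the constant.
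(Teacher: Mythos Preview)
Your approach is essentially the paper's: pull back a partition of unity $\{\tilde\rho_i\}$ from $N$, define $\varkappa_J=\sum_i \rho_i\,\lambda_{iJ}$, verify $\delta\varkappa=\lambda$ via the cocycle condition, and then read off the two norm bounds from the Leibniz expansion $d\varkappa_J=\sum_i d\rho_i\wedge\lambda_{iJ}+\sum_i\rho_i\,d\lambda_{iJ}$. That skeleton is correct and matches the paper exactly.

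However, your step (ii) is confused and unnecessary. You write that one must ``invoke $\sum_i d\rho_i=0$ and $\delta\lambda=0$ to re-express the first sum through $\lambda$'', worrying that $\sum_i(d\rho_i)\wedge\lambda_{iJ}$ depends on ``an arbitrary primitive''. But $\lambda_{iJ}$ \emph{is already} a component of $\lambda$: since $\lambda\in K^{k,s+1}$ is indexed by $(s+1)$-tuples and $J$ has length $s$, the index $iJ$ has length $s+1$. So the direct bound
\[
\Bigl\|\sum_i (d\rho_i)\wedge\lambda_{iJ}\Bigr\|_{L^q(V_J,\beta)}
\le \sum_i \sup|d\rho_i|\,\|\lambda_{iJ}\|_{L^q(V_{iJ},\beta)}
\le \const\,\|\lambda\|_{L^q(K^{k,s+1},\beta)}
\]
works immediately, with no rewriting needed. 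Your proposed identity $\sum_i (d\rho_i)\wedge\lambda_{iJ}=\pm\sum_i(d\rho_i)\wedge(\delta\lambda)_{iJ}\mp\cdots$ does not even type-check: $(\delta\lambda)$ lives in $K^{k,s+2}$, so its components carry $s+2$ indices, not $s+1$. In the paper the cocycle condition $\delta\lambda=0$ is used \emph{only} to verify $\delta\varkappa=\lambda$; neither it nor $\sum_i d\rho_i=0$ enters the norm estimates. Drop step (ii) and the proof goes through as in the paper.
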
 

\begin{proof}
The~fact that $(K^{k,\bullet},\delta)$ is an exact~complex was established 
in~\cite[Propositions~8.3 and~8.5]{BottTu} but we will give the~standard argument 
for~completeness. 
If $\varkappa\in L^q(K^{k,s},\beta)$ then
\begin{multline*}
(\delta(\delta\varkappa))_{i_0\dotsi_{s+1}}
= \sum_r (-1)^i (\delta\varkappa)_{i_0\dots\hat i_r\dots i_{s+1}} \\
=\sum_{l<r} (-1)^r (-1)^l 
\varkappa_{i_0\dots\hat i_l \dots \hat i_r \dots i_{s+1}} 
+ \sum_{l<r} (-1)^r (-1)^{l-1} 
\varkappa_{i_0\dots\hat i_l \dots \hat i_r \dots i_{s+1}}=0.
\end{multline*}

Suppose that
$\lambda\in L^q(K^{k,s+1},\beta)$ is such that $\delta\lambda=0$. Let $\tilde\rho_j$
be a~partition of~unity subordinate to~the~cover $\{U_i\}$ of~$N$. Then the~functions 
$\rho_j: M \to \mathbb{R}$, $\rho_j(t,x)=\tilde\rho_j(x)$ for all
$(t,x)\in M=[a,b)\times N$, constitute a~partition of~unity subordinate 
to~the~cover $\{V_i\}$ of~$M$. Put
\begin{equation}\label{kappa}
\varkappa_{i_0 \dots i_{s-1}} :=  \sum_j \rho_j \lambda_{j i_0 \dots i_{s-1}}. 
\end{equation}
Show that $\delta\varkappa=\lambda$. 

We have
$$
(\delta\varkappa)_{i_0 \dots i_s} 
= \sum_r (-1)^r \varkappa_{i_0 \dots \hat i_r \dots i_s}
= \sum_{r,j} (-1)^r \rho_j \lambda_{j i_0\dots \hat i_r \dots i_s}. 
$$
Since $\lambda$ is a~cocycle, 
$$
(\delta\lambda)_{j i_0 \dots i_s} = \lambda_{i_0 \dots i_s}
+ \sum_r (-1)^{r+1} \lambda_{j i_0 \dots \hat i_r \dots i_s} =0
$$
Hence,
$$
(\delta\varkappa)_{i_0 \dots i_s} = \sum_j \rho_j \sum_r (-1)^r 
\lambda_{j i_0 \dots \hat i_r \dots i_s} 
= \sum_j \rho_j \lambda_{i_0 \dots i_s} = \lambda_{i_0 \dots i_s}. 
$$

Thus, $(K^{k,\bullet},\delta)$ is indeed an~exact complex.

The~element~$\varkappa$ defined by~(\ref{kappa}) admits the~estimates 
of~the~norms mentioned in~the~proposition.

Indeed, we infer
\begin{align*}
\|\varkappa\|_{L^q(K^{k,s},\beta)}  
& = \sum_{i_0<\dots<i_{s-1}} \biggl\| \sum_j \rho_j 
\lambda_{j i_0 \dots i_{s-1}} \biggr\|_{L^q(U_I)} \\
&\le \sum_{i_0<\dots<i_{s-1}}  
\sum_j \| \rho_j \lambda_{j i_0 \dots i_{s-1}} \|_{L^q(U_I)} \\
&\le \sum_{i_0<\dots<i_{s-1}}  
\sum_j \| \lambda_{j i_0 \dots i_{s-1}} \|_{L^q(U_{j,I})}
\le \|\lambda\|_{L^q(K^{k,s+1},\beta)},
\end{align*}
which gives the~first estimate of~the~proposition.

Let us prove the~second estimate. We have
$$
d\varkappa_{i_0 \dots i_{s-1}} = \sum_j 
(d\rho_j \wedge \lambda_{j i_0 \dots i_{s-1}} + \rho_j d \lambda_{j i_0 \dots i_{s-1}}).
$$
Therefore,
\begin{multline*}
\|\varkappa\|_{L^q(K^{k+1,s},\beta)}  
= \sum_{i_0<\dots<i_{s-1}} \biggl\| \sum_j
d\rho_j \wedge \lambda_{j i_0 \dots i_{s-1}} + \rho_j d \lambda_{j i_0 \dots i_{s-1}}
\biggr\|_{L^q(U_I)}  \\
 \le \sum_{i_0<\dots<i_{s-1}} \sum_j
\left( \| d\rho_j \wedge \lambda_{j i_0 \dots i_{s-1}}\|_{L^q(U_I)} 
+ \|\rho_j d \lambda_{j i_0 \dots i_{s-1}}\|_{L^q(U_I)} \right) \\
\le \const 
\sum_{i_0<\dots<i_{s-1}} \sum_j
\left( \| \lambda_{j i_0 \dots i_{s-1}}\|_{L^q(U_I)} 
+ \| d \lambda_{j i_0 \dots i_{s-1}}\|_{L^q(U_I)} \right) \\
= \const \left( \|\lambda\|_{L^q(K^{k,s+1},\beta)} + \|d\lambda\|_{L^q(K^{k+1,s+1},\beta)}
\right).
\end{multline*}
\end{proof}

\smallskip
Now, applying the~general scheme of~\cite{Shar2011}, we first construct some 
elements $\xi^s\in L^q(K^{k-s-1,s+1},\beta)$ and then elements
$x^s\in L^q(K^{k-s-1,s},\beta)$ such that 
$\xi=x^0\in C^\infty L^q(M,\Lambda^{k-1},\beta)$
is an~element satisfying the~claim of~Theorem~\ref{glob-sp-cyl}.
\smallskip

{\bf Construction of~the~elements $\xi^s\in L^q(K^{k-s-1,s+1},\beta)$.}

Put $\xi^{-1}=\omega$ and define (by~induction) $\xi^s$ by~setting its 
component~$(\xi^s)_I$ to~be a~solution to~the~equation
\begin{equation}\label{eq-xi}
d\xi^s_I = (\delta\xi^{s-1})_I
\end{equation}
in~$V_I$, $I=(i_0,\dots,i_s)$ such that 
\begin{equation}
\|\xi^s_I\|_{L^q(V_I,\Lambda^{k-s-1},\beta)} 
\le \const \|(\delta\xi^{s-1})_I\|_{L^q(V_I,\Lambda^{k-s},\beta)}
\end{equation}                           
for $0\le s \le k-1$.

Note that such a~solution always exists due to~the~local Sobolev--Poincar\'e 
inequality (Corollary~\ref{cor:two weights}) since $V_I$ is bi-Lipschitz 
diffeomorphic to~a~cylinder over a~convex subset in~$\mathbb{R}^{n}$ 
of~finite volume.

We have the~following estimate of~the~weighted $q$-norm of~$\xi^s$:

\begin{prop}\label{norm-xi}
If $I=(i_0,\dots,i_s)$ then
$$
\|\xi^s_I\|_{L^q(V_I,\Lambda^{k-s-1},\beta)} \le 
\const \|\omega\|_{L^q(M,\Lambda^k,\gamma)}.
$$
\end{prop}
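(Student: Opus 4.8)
The plan is to prove Proposition~\ref{norm-xi} by induction on~$s$, $0 \le s \le k-1$, combining the local Sobolev--Poincaré estimate that defines~$\xi^s$ with the cochain-homotopy estimates of Proposition~\ref{exac}. For the base case $s=0$: here $\xi^{-1}=\omega$, so $(\delta\xi^{-1})_I = \omega|_{V_i}$ (a single-index restriction), and the defining inequality for~$\xi^0$ gives $\|\xi^0_i\|_{L^q(V_i,\Lambda^{k-1},\beta)} \le \const\,\|\omega\|_{L^q(V_i,\Lambda^k,\beta)}$. Since $\omega \in C^\infty L^p(M,\Lambda^k,\gamma)$, I would apply Corollary~\ref{cor:two weights} (or rather the estimate underlying it, with $\beta$ against $\gamma$) on each $V_i$ — which is bi-Lipschitz to a cylinder $[a,b)\times U_{\mathbb{R}^n}$ over a bounded convex set — to bound the right-hand side by $\const\,\|\omega\|_{L^p(V_i,\Lambda^k,\gamma)} \le \const\,\|\omega\|_{L^p(M,\Lambda^k,\gamma)}$. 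This is exactly where the hypotheses $\|\beta\|_{L^q([a,b))}<\infty$, $\|t\beta(t)\|_{L^q([a,b))}<\infty$, $\|\gamma^{-1}\|_{L^{p\overline{p}/(p-\overline{p})}([a,b))}<\infty$ and $\frac1p-\frac1q<\frac{q-1}{q(n+1)}$ get used.

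For the inductive step, assume the bound $\|\xi^{s-1}_J\|_{L^q(V_J,\Lambda^{k-s},\beta)} \le \const\,\|\omega\|_{L^q(M,\Lambda^k,\gamma)}$ holds for all multi-indices~$J$ of length~$s$. By definition, $\xi^s_I$ satisfies $\|\xi^s_I\|_{L^q(V_I,\Lambda^{k-s-1},\beta)} \le \const\,\|(\delta\xi^{s-1})_I\|_{L^q(V_I,\Lambda^{k-s},\beta)}$. Now $(\delta\xi^{s-1})_I = \sum_{r} (\pm) \xi^{s-1}_{i_0\dots\hat i_r\dots i_s}\big|_{V_I}$ is a finite sum (at most $s+1$ terms) of restrictions of the $\xi^{s-1}_J$'s to the smaller set $V_I \subset V_J$, so by the triangle inequality and monotonicity of the $L^q(\cdot,\beta)$-norm under restriction,
\[
\|(\delta\xi^{s-1})_I\|_{L^q(V_I,\Lambda^{k-s},\beta)}
\le \sum_{r=0}^{s} \|\xi^{s-1}_{i_0\dots\hat i_r\dots i_s}\|_{L^q(V_I,\Lambda^{k-s},\beta)}
\le \const\,\|\omega\|_{L^q(M,\Lambda^k,\gamma)},
\]
using the inductive hypothesis on each term. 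Combining the two displayed inequalities closes the induction. (Alternatively one can phrase the $\delta$-step directly through Proposition~\ref{exac}, but the elementary triangle-inequality argument suffices since we only need the $L^q(\cdot,\beta)$-bound here, not the $d$-bound.)

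I do not expect a serious obstacle: the only slightly delicate point is bookkeeping of the bi-Lipschitz identifications — one must check that a bi-Lipschitz diffeomorphism $V_I \cong [a,b)\times U_{\mathbb{R}^n}$ distorts the weighted norms $L^p(\cdot,\gamma)$ and $L^q(\cdot,\beta)$ only by bounded factors, so that Corollary~\ref{cor:two weights} (stated for genuine product cylinders over convex subsets of~$\mathbb{R}^n$) transfers to each~$V_I$. Since the base cover~$\mathcal{U}$ is finite and each~$U_i$ has compact closure inside a coordinate chart, all the bi-Lipschitz constants, and hence all the implied constants~$\const$ appearing above, can be taken uniform over the finitely many multi-indices~$I$. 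The weights $\beta,\gamma$ depend only on the $[a,b)$-coordinate, which is preserved (up to bi-Lipschitz equivalence) by these diffeomorphisms, so the weight hypotheses are exactly what Corollary~\ref{cor:two weights} requires. Thus the proposition follows by the two-line induction sketched above.
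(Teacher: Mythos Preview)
Your proposal is correct and follows essentially the same induction-on-$s$ argument as the paper: the base case invokes the local Sobolev--Poincar\'e inequality (Corollary~\ref{cor:two weights}), and the inductive step chains the defining estimate for~$\xi^s_I$, the triangle inequality for $(\delta\xi^{s-1})_I$, and monotonicity under the inclusion $V_I\subset V_{i_0\dots\hat i_r\dots i_s}$. Your remarks on bi-Lipschitz bookkeeping and uniformity of constants over the finite cover are correct elaborations of points the paper leaves implicit.
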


\begin{proof}
Use induction on~$s$. For $s=0$, the~assertion follows from~the~local 
Sobolev--Poincar\'e inequality. Let now $s>0$. We infer
\begin{align*}
\|\xi^s_I \|_{L^q(V_I,\Lambda^{k-s-1},\beta)} 
& \le \const \|(\delta\xi^{s-1})_I\|_{L^q(V_I,\Lambda^{k-s},\beta)}
\\
&\le \const \sum_{r=0}^s 
\|\xi^{s-1}_{i_0\dots\hat i_r\dots i_{s-1}}\|_{L^q(V_I,\Lambda^{k-s},\beta)}
\\
&\le \const \sum_{r=0}^s 
\|\xi^{s-1}_{i_0\dots\hat i_r\dots i_{s-1}}\|_{L^q(V_{i_0\dots\hat i_r\dots i_{s-1}},
\Lambda^{k-s},\beta)}
\\
&\le \const \sum_{r=0}^s \|\omega\|_{L^p(M,\gamma)} \le \const \|\omega\|_{L^p(M,\gamma)}
\end{align*}
\end{proof}

Note that $\xi^{k-1}$ is a~collection of~$0$-forms satisfying the~condition
\hbox{$d\delta\xi^{k-1}=0$}. Thus, the~functions $(\delta\xi^{k-1})_I$ are constants
on~each set~$V_I$, $I=(i_0,\dots,i_k)$. The~global constant functions
$(\delta\xi^{k-1})_I$ on~$M$ belong to~$L^q(M,\beta)$ due to~the~hypotheses on~$\beta$. 

The~following assertion is Theorem~3.10 in~\cite{Shar2011}:

\begin{lem}\label{th-shartser}
There exists $c\in K^{0,k}$ with constant components~$c_I$, $I=(i_0,\dots,i_{k-1})$,
such that
$$
(\delta c)_I = \sum_{r=0}^k (-1)^r c_{i_0\dots \hat i_r \dots i_k} 
(\delta\xi^{k-1})_I, \quad I=(i_0,\dots,i_k).
$$
In~addition, there exist numbers $b_{I,L}\in\mathbb{R}$, $I=(i_0,\dots,i_{k-1})$,
$L=(i_0,\dots,i_k)$, such that
$$
c_I= \sum_L b_{I,L} (\delta\xi^{k-1})_L,
$$
where $b_{i,L}$ depend on~the~chosen cover~$\mathcal{U}$ of~$N$.
\end{lem}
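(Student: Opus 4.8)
The plan is to run the classical ``tic-tac-toe'' argument for the \v{C}ech--de~Rham double complex $(K^{\bullet,\bullet},d,\delta)$, exactly as in~\cite[Theorem~3.10]{Shar2011} and~\cite{BottTu}. What the lemma really asserts is that the constant \v{C}ech cochain $\delta\xi^{k-1}\in K^{0,k+1}$ is a $\delta$-coboundary $\delta c$ of a \emph{constant} cochain $c\in K^{0,k}$ (this is the displayed identity, once $(\delta c)_I=\sum_{r=0}^k(-1)^r c_{i_0\dots\hat i_r\dots i_k}$ is written out), and moreover that $c$ may be chosen to depend linearly on the entries of $\delta\xi^{k-1}$ through coefficients $b_{I,L}$ determined by the combinatorics of~$\mathcal{U}$. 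The single new ingredient beyond what is already in place is the hypothesis that $\omega$ is \emph{exact}: this is what makes the relevant cohomology class vanish.

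First I would record the two structural facts about the double complex. For each fixed degree~$k$, the augmented $\delta$-column $0\to C^\infty(M,\Lambda^k)\to K^{k,1}\to K^{k,2}\to\cdots$ is exact: exactness at the terms $K^{k,s}$, $s\ge1$, is Proposition~\ref{exac}, and exactness at $C^\infty(M,\Lambda^k)$ holds because $\{V_i\}$ covers~$M$. On the other hand, for each fixed~$s\ge1$ the $d$-row $K^{0,s}\to K^{1,s}\to\cdots$ resolves $\check C^{s-1}(\mathcal{U};\mathbb{R})$ (the \v{C}ech cochains of locally constant functions), because every $V_I=[a,b)\times U_I$ is bi-Lipschitz, hence homotopy, equivalent to a convex set and so contractible and connected. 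The standard double-complex argument then identifies $H^k(\mathrm{Tot}\,K)$ with both $H^k_{\mathrm{DR}}(M)$ and $\check H^k(\mathcal{U};\mathbb{R})$, and --- the point --- the comparison isomorphism $H^k_{\mathrm{DR}}(M)\to\check H^k(\mathcal{U};\mathbb{R})$ is computed precisely by the zig-zag $\omega=\xi^{-1},\ \xi^0,\ \dots,\ \xi^{k-1},\ \delta\xi^{k-1}$ constructed just before the lemma.

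Next I would invoke exactness of~$\omega$. Since $[\omega]=0$ in $H^k_{\mathrm{DR}}(M)$, the identification above forces $[\delta\xi^{k-1}]=0$ in $\check H^k(\mathcal{U};\mathbb{R})$, i.e. $\delta\xi^{k-1}=\delta c$ for a constant \v{C}ech cochain $c\in K^{0,k}$, which is the assertion of the lemma. A direct check of the same fact, which I would include as a remark: pick a global primitive $\mu\in C^\infty(M,\Lambda^{k-1})$, $d\mu=\omega$, and run the construction with the norm-oblivious choices $(\xi^0)_i:=\mu|_{V_i}$; then $\delta\xi^0=0$, so one may take $\xi^1=\cdots=\xi^{k-1}=0$, whence $\delta\xi^{k-1}=0$, and changing the choices of the $\xi^s$ alters $\delta\xi^{k-1}$ only by a $\delta$-coboundary of constants (the usual induction using exactness of the $d$-rows), so $\delta\xi^{k-1}$ is a coboundary of constants for the actual norm-controlled $\xi^s$ too. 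For the linearity: the \v{C}ech coboundary $\delta\colon\check C^{k-1}(\mathcal{U};\mathbb{R})\to\check C^k(\mathcal{U};\mathbb{R})$ is a fixed linear map of finite-dimensional spaces depending only on the nerve of~$\mathcal{U}$; fixing once and for all a linear right inverse of~$\delta$ over its image, extended by zero to a linear map~$\sigma$, and setting $c:=\sigma(\delta\xi^{k-1})$, we get $\delta c=\delta\xi^{k-1}$ because $\delta\xi^{k-1}$ lies in the image, and $c_I=\sum_L b_{I,L}(\delta\xi^{k-1})_L$ with $b_{I,L}$ the matrix entries of~$\sigma$ --- numbers depending only on~$\mathcal{U}$, as required.

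The main obstacle is the second paragraph: checking that the zig-zag of the construction genuinely realizes the comparison isomorphism, so that exactness of~$\omega$ kills the \v{C}ech class. This is purely homological --- no analysis enters at this stage, all the quantitative weighted Sobolev--Poincar\'e estimates having been absorbed into Corollary~\ref{cor:two weights} and Proposition~\ref{exac} --- and it is exactly \cite[Theorem~3.10]{Shar2011}, itself the tic-tac-toe lemma of~\cite{BottTu}. One should only be mildly careful that $M=[a,b)\times N$ is merely homotopy equivalent to the closed manifold~$N$ (so $H^k_{\mathrm{DR}}(M)\cong H^k_{\mathrm{DR}}(N)$) and that the nerve of $\mathcal{V}=\{V_i\}$ coincides with that of $\mathcal{U}=\{U_i\}$ since $V_I=[a,b)\times U_I\neq\emptyset\iff U_I\neq\emptyset$; both are immediate.
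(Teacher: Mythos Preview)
Your argument is correct and is precisely the approach of the cited reference: the paper does not give its own proof of this lemma but simply invokes it as Theorem~3.10 of~\cite{Shar2011}, which in turn is the tic-tac-toe argument from~\cite{BottTu} that you have spelled out. Your unpacking --- identifying $\delta\xi^{k-1}$ as a constant \v{C}ech cocycle, using exactness of~$\omega$ to kill its \v{C}ech class via the zig-zag comparison isomorphism, and producing the coefficients~$b_{I,L}$ from a fixed linear right inverse to the finite-dimensional coboundary map --- is exactly what lies behind the citation.
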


We have

\begin{prop}\label{c-estim}
The~constants~$c_I$ of Lemma~\ref{th-shartser} satisfy the~estimate
$$
\|c_I\|_{L^q(V_I,\beta)} \le \const \|\omega\|_{L^p(M,\Lambda^k,\gamma)}
$$ 
\end{prop}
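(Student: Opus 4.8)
The plan is to trace the constants $c_I$ through the two defining relations of Lemma~\ref{th-shartser}, bounding each ingredient by an appropriate weighted $p$-norm of~$\omega$. The key observation is that the second formula of the lemma expresses each $c_I$ as a \emph{finite} real-linear combination
$$
c_I = \sum_L b_{I,L}\,(\delta\xi^{k-1})_L,
$$
with coefficients $b_{I,L}$ depending only on the fixed finite cover $\mathcal{U}$ of~$N$ (hence on $k$, $n$, and the cover, but not on~$\omega$). So it suffices to estimate $\|(\delta\xi^{k-1})_L\|_{L^q(V_L,\beta)}$ for each multi-index $L=(i_0,\dots,i_k)$ and then sum with the fixed weights $|b_{I,L}|$.

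First I would expand $(\delta\xi^{k-1})_L = \sum_{r=0}^k (-1)^r \xi^{k-1}_{i_0\dots\hat i_r\dots i_k}\big|_{V_L}$ and apply the triangle inequality in $L^q(V_L,\beta)$. Since $V_L \subset V_{i_0\dots\hat i_r\dots i_k}$, each term is controlled by $\|\xi^{k-1}_{i_0\dots\hat i_r\dots i_k}\|_{L^q(V_{i_0\dots\hat i_r\dots i_k},\Lambda^0,\beta)}$, which is exactly the quantity bounded in Proposition~\ref{norm-xi} (applied with $s=k-1$): it is $\le \const\,\|\omega\|_{L^p(M,\Lambda^k,\gamma)}$. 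Summing the $k+1$ terms and then the finitely many $L$'s with weights $|b_{I,L}|$ yields
$$
\|c_I\|_{L^q(V_I,\beta)} \le \const \sum_L |b_{I,L}| \cdot \|\omega\|_{L^p(M,\Lambda^k,\gamma)} \le \const\,\|\omega\|_{L^p(M,\Lambda^k,\gamma)},
$$
with the final constant absorbing $k$, $n$, $p$, $q$, $\alpha$, $\beta$, $\gamma$, and the combinatorial data of the cover. One should note that $c_I$ is a constant function, so its weighted norm $\|c_I\|_{L^q(V_I,\beta)} = |c_I|\,\|\beta\|_{L^q([a,b))}\,|U_I|^{1/q}$ is finite precisely because of the standing hypothesis $\|\beta\|_{L^q([a,b))}<\infty$ together with the boundedness of the $U_i$; this is the same reasoning already used for the functions $(\delta\xi^{k-1})_I$ just before the lemma.

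I do not expect a serious obstacle here: the proof is essentially bookkeeping, since all the analytic work has been done in Proposition~\ref{norm-xi}. The only point requiring mild care is to make sure the $L^q(V_I,\beta)$-norm on the left-hand side really is finite and comparable to the $L^q(V_{i_0\dots\hat i_r\dots i_k},\beta)$-norms on the larger sets — this follows from monotonicity of the integral under $V_L \subseteq V_{i_0\dots\hat i_r\dots i_k}$ and the positivity of $\beta$ — and to observe that the coefficients $b_{I,L}$ are $\omega$-independent so they may be swallowed into the constant. Everything else is a direct concatenation of the triangle inequality with the already-established estimate.
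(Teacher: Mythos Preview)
Your proposal is correct and follows essentially the same route as the paper: start from $c_I=\sum_L b_{I,L}(\delta\xi^{k-1})_L$, bound each $(\delta\xi^{k-1})_L$ via the triangle inequality and Proposition~\ref{norm-xi}, and absorb the cover-dependent coefficients into the constant. The paper makes slightly more explicit the passage from $\|(\delta\xi^{k-1})_L\|_{L^q(V_I,\beta)}$ to $\|(\delta\xi^{k-1})_L\|_{L^q(V_L,\beta)}$ by writing out the ratio $(\vol(U_I))^{1/q}/(\vol(U_L))^{1/q}$, but your remark that $c_I$ and $(\delta\xi^{k-1})_L$ are global constants with norm $|{\cdot}|\,\|\beta\|_{L^q([a,b))}\,|U_\bullet|^{1/q}$ captures the same content.
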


\begin{proof}
By~Lemma~\ref{th-shartser}, each $c_I$ is representable as
$c_I=\sum_L b_{I,L} (\delta\xi^{k-1})_L$. Hence,
$$
\|c_I\|_{L^q(V_I,\beta)} \le \sum_L |b_{I,L}| \|(\delta\xi^{k-1})_L\|_{L^q(V_I,\beta)}. 
$$
Since $(\delta\xi^{k-1})_L$ is a~globally defined constant function on~$M$ as
in~the~proof of~Proposition~\ref{norm-xi}, we have
\begin{multline*}
\|(\delta\xi^{k-1})_L\|_{L^q(V_I,\beta)} = 
\frac{\|\beta\|_{L^q([a,b))} (\vol(U_I))^{1/q}}{\|\beta\|_{L^q([a,b))} (\vol(U_L))^{1/q}}
\|(\delta\xi^{k-1})_L\|_{L^q(V_L,\beta)}
\\
= \frac{(\vol(U_I))^{1/q}} {(\vol(U_L))^{1/q}}
\|(\delta\xi^{k-1})_L\|_{L^q(V_L,\beta)}
\le \const \|\omega\|_{L^p(M,\Lambda^k,\gamma)}.
\end{multline*}
This gives the~estimate of~the~proposition.
\end{proof}

{\bf Construction of~the~elements $x^s\in L^q(K^{k-s-1,s},\beta)$.}

Let us now glue all the~forms $\xi^s$, $s-0,\dots, k-1$, into~a~global form~$\xi$ 
satisfying~(\ref{poin-cyl}). Construct by~induction elements 
$x^s\in L^q(K^{k-s-1,s},\beta)$, $s=k-1,\dots,1,0$, such that $\xi=x^0$ 
is a~desired form on~$M$.

Put $\tilde\xi^{k-1}_I = \xi^{k-1}_I - c_I$, where $c_I$ is as in~Lemma~\ref{th-shartser},
$I=(i_0,\dots,i_{k-1})$. We have $d\tilde\xi^{k-1}_I = d\xi^{k-1}_I$ and
$\delta\tilde\xi^{k-1}_I=0$. By~Proposition~\ref{exac}, there exists 
$x^{k-1}\in L^q(K^{0,k-1},\beta)$ such that $\delta x^{k-1} = \tilde\xi^{k-1}$ and
\begin{gather*}
\|dx^{k-1}\|_{L^q(K^{1,k-1},\beta)} 
\le \const \|\tilde\xi^{k-1}\|_{L^q(K^{0,k},\beta)},
\\
\|x^{k-1}\|_{L^q(K^{0,k-1},\beta)} 
\le \const \left(\|\tilde\xi^{k-1}\|_{L^q(K^{0,k},\beta)}
+ \|d\tilde\xi^{k-1}\|_{L^q(K^{1,k},\beta)}\right).
\end{gather*}
Propositions~\ref{norm-xi} and~\ref{c-estim} yield
\begin{equation}\label{xk-1}
\|x^{k-1}\|_{L^q(K^{0,k-1},\beta)} \le\const \|\omega\|_{L^p(M,\Lambda^k,\gamma)}
\end{equation}
and 
\begin{multline}\label{dxk-1}
\|dx^{k-1}\|_{L^q(K^{1,k-1},\beta)} \le \const \left( \|\omega\|_{L^p(M,\Lambda^k,\gamma)}
+ \|\delta\xi^{k-2}\|_{L^q(K^{1,k},\beta)} \right)
\\
\le\const \left( \|\omega\|_{L^p(M,\Lambda^k,\gamma)} + \|\xi^{k-2}\|_{L^q(K^{1,k-1},\beta)}
\right) \le \const \|\omega\|_{L^p(M,\Lambda^k,\gamma)}.
\end{multline}
Suppose that $x^{k-(r-1)}$ is already constructed. By~Proposition~\ref{exac}, there
exists $x^{k-r}$ such that
$$
\delta x^{k-r} = \xi^{k-r} - dx^{k-r+1},
$$
where
\begin{equation}\label{xk-r}
\|x^{k-r}\|_{L^q(K^{r-1,k-r},\beta)} \le\const
\|\xi^{k-r} - dx^{k-r+1}\|_{L^q(K^{r-1,k-r+1},\beta)}
\end{equation}
and
\begin{multline} \label{dxk-r}
\|dx^{k-r}\|_{L^q(K^{r,k-r},\beta)} 
\\
\le\const \left( \|\xi^{k-r} - dx^{k-r+1}\|_{L^q(K^{r-1,k-r+1},\beta)}
+ \|d\xi^{k-r}\|_{L^q(K^{r,k-r+1},\beta)} \right)
\\
\le \const \bigl( \|\xi^{k-r}\|_{L^q(K^{r-1,k-r+1},\beta)} 
+ \|dx^{k-r+1}\|_{L^q(K^{r-1,k-r+1},\beta)} 
\\
+  \|\delta\xi^{k-r-1}\|_{L^q(K^{r,k-r+1},\beta)} \bigr)
\\
\le \const \left( \|\omega\|_{L^p(M,\Lambda^k,\gamma)}
+ \|dx^{k-r+1}\|_{L^q(K^{r-1,k-r+1},\beta)} \right).   
\end{multline}
Here the~last inequality stems from~the~fact that
$$
\delta(\xi^{k-r} - dx^{k-r+1}) = \delta\delta x^{k-r} = 0.
$$

The above considerations imply the~following

\begin{prop}\label{estim-xs}
The~forms~$x^s$ admit the~estimates:

{\rm(1)} $\|x^{k-r}\|_{L^q(K^{r-1,k-r},\beta)} 
\le\const \|\omega\|_{L^p(M,\Lambda^k,\gamma)}$;

{\rm(2)} $\|dx^{k-r}\|_{L^q(K^{r,k-r},\beta)}
\le\const \|\omega\|_{L^p(M,\Lambda^k,\gamma)}$.
\end{prop}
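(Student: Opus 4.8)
The plan is to run an induction on~$r$ from~$r=1$ downwards in~$s=k-r$ (equivalently, induction on~the~codimension step) using the~two recursive estimates~(\ref{xk-r}) and~(\ref{dxk-r}) together with~the~base case already recorded in~(\ref{xk-1})--(\ref{dxk-1}). The two claims~(1) and~(2) have to~be proved simultaneously, since the~bound for~$\|x^{k-r}\|$ feeds through~$d$ into~the~bound for~$\|dx^{k-r}\|$ at~the~same step, and the~bound for~$\|dx^{k-r+1}\|$ at~the~previous step is what~makes the~right-hand side of~(\ref{dxk-r}) controllable.

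First I~would fix~the~base of~the~induction: for~$r=1$, i.e.\ $s=k-1$, estimates~(\ref{xk-1}) and~(\ref{dxk-1}) give exactly~(1) and~(2) with~the~constant absorbing~$\|\omega\|_{L^p(M,\Lambda^k,\gamma)}$. For~the~inductive step, assume~(1) and~(2) hold for~$x^{k-r+1}$, that is,
\[
\|x^{k-r+1}\|_{L^q(K^{r-2,k-r+1},\beta)} \le \const\,\|\omega\|_{L^p(M,\Lambda^k,\gamma)}, \quad
\|dx^{k-r+1}\|_{L^q(K^{r-1,k-r+1},\beta)} \le \const\,\|\omega\|_{L^p(M,\Lambda^k,\gamma)}.
\]
Plugging the~second of~these into~the~right-hand side of~(\ref{xk-r}) and using the~triangle inequality $\|\xi^{k-r}-dx^{k-r+1}\|\le\|\xi^{k-r}\|+\|dx^{k-r+1}\|$ together with~Proposition~\ref{norm-xi} (applied componentwise over the~finitely many multi-indices~$I$, and recalling that the~cover~$\mathcal{U}$, hence~$\mathcal{V}$, is finite, so that the~$K^{\bullet,\bullet}$-norms are finite sums of~the~componentwise norms) yields~(1) for~$x^{k-r}$. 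Then feeding both the~just-obtained bound on~$\|\xi^{k-r}-dx^{k-r+1}\|$ and the~inductive bound on~$\|dx^{k-r+1}\|$ into~(\ref{dxk-r}), and noting that the~term $\|\delta\xi^{k-r-1}\|_{L^q(K^{r,k-r+1},\beta)}$ is controlled by~$\sum_j\|\xi^{k-r-1}_{\cdots}\|$, which is again $\le\const\,\|\omega\|_{L^p(M,\Lambda^k,\gamma)}$ by~Proposition~\ref{norm-xi}, gives~(2) for~$x^{k-r}$. This closes the~induction.

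The only genuinely delicate point—and the~step I~expect to~be the~main obstacle—is keeping the~bookkeeping of~the~Banach-complex norms straight across the~two simultaneously-recursed quantities: one must verify that at~each step every term appearing on~the~right of~(\ref{xk-r}) and~(\ref{dxk-r}) is either a~$\xi$-term (controlled by~Proposition~\ref{norm-xi}), a~$dx$-term from~the~\emph{previous} step (controlled by~the~inductive hypothesis~(2)), or a~$\delta\xi$-term (controlled, after expanding~$\delta$ as an~alternating sum of~restrictions, again by~Proposition~\ref{norm-xi} and the~finiteness of~the~cover). There is never a~$dx$-term from~the~\emph{current} step on~the~right, so the~recursion is genuinely well-founded and no circularity arises; this is exactly the~structural reason the~two estimates must be carried in~tandem. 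All the~constants are harmless: they depend only on~$k$, $p$, $q$, $n$, $\alpha$, $\beta$, $\gamma$, on~the~finitely many bi-Lipschitz constants of~the~charts, on~the~$C^1$-norms of~the~partition of~unity, and on~the~combinatorial coefficients~$b_{I,L}$ of~Lemma~\ref{th-shartser}, and the~maximum over~the~finitely many multi-indices of~all of~these is a~single finite constant.
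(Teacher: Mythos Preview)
Your proof is correct and follows essentially the same approach as the paper's: induction on~$r$, with the base case~$r=1$ given by~(\ref{xk-1})--(\ref{dxk-1}), and the inductive step driven by~(\ref{xk-r}), (\ref{dxk-r}), and Proposition~\ref{norm-xi}. The only cosmetic difference is that you establish~(1) before~(2) at each step, whereas the paper does~(2) first and then~(1); since both orderings rely solely on the inductive hypothesis~(2) at the previous step to control~$\|dx^{k-r+1}\|$, this is immaterial.
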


\begin{proof}
Use induction on~$r$. For $r=1$, (1) and (2) are just estimates~(\ref{xk-1})
and~(\ref{dxk-1}). Assume that $r>1$. For~proving estimate~(2), observe that,
by~the~induction hypothesis and~(\ref{dxk-r}),
\begin{align*}
\|dx^{k-r}\|_{L^q(K^{r,k-r},\beta)} 
&\le  \const \left( \|\omega\|_{L^p(M,\Lambda^k,\gamma)}
+ \|dx^{k-r+1}\|_{L^q(K^{r-1,k-r+1},\beta)}   \right)
\\
&\le \const \|\omega\|_{L^p(M,\Lambda^k,\gamma)}.
\end{align*}
Now, Proposition~\ref{norm-xi} and estimates~(\ref{xk-r}) and~(2) yield
\begin{multline*}
\|x^{k-r}\|_{L^q(K^{r-1,k-r},\beta)} \le\const
\|\xi^{k-r} - dx^{k-r+1}\|_{L^q(K^{r-1,k-r+1},\beta)} 
\\
\le \const \left( \|\xi^{k-r}\|_{L^q(K^{r-1,k-r+1},\beta)} + \|dx^{k-r+1}\|_{L^q(K^{r-1,k-r+1},\beta)} 
\right)
\\
\le\const \|\omega\|_{L^p(M,\Lambda^k,\gamma)}.
\end{multline*}
\end{proof}

Finally, put $\xi= x^0$. Then $d\xi=\omega$. Indeed, we have
$$
\delta(\omega-dx^0)=\delta\omega - d\delta x^0 
= \delta\omega - d(\xi^0-dx^1) = \delta\omega - d\xi^0=0.
$$
Since $\delta(\omega-dx^0)_i=(\omega-dx^0)|_{V_i}$, we infer that $\omega=dx_0$
on~$M$. By~Proposition~\ref{estim-xs}, 
$$
\|\xi\|_{L^q(M,\Lambda^{k-1},\beta)} = \|x^0\|_{L^q(K^{k-1,0},\beta)} 
\le\const \|\omega\|_{L^p(M,\Lambda^k,\gamma)}.
$$

Theorem~\ref{glob-sp-cyl} is completely proved.

\section{$L_{q,p}$-Cohomology of~a~Twisted Cylinder}\label{rel-cohom}

\begin{thm}\label{thm: main global}
Suppose that $N$ is a closed manifold of~dimension~$n$, $H^k_{\mathrm{DR}}(N)=0$,
$q\ge p\geq 1$, and $\frac{1}{p}-\frac{1}{q}<\frac{q-1}{q(n+1)}$.
If 
\[
\|\max(F_{k-2,q},F_{k-1,q})\|_{L^{q}([a,b))}<\infty, \quad
\| t \max(F_{k-2,q},F_{k-1,q})(t))\|_{L^{q}([a,b))}<\infty
\]  
and 
\[
\|\{\min(f_{k-1,p},f_{k,p})\}^{-1}\|_{L^{\frac{p\overline{p}}{p-\overline{p}}}([a,b))}
<\infty
\]
for some $\overline{p}$, $1\le \overline{p}\le p$ {\rm(}for $\overline{p}=p$, we put 
$\frac{p\overline{p}}{p-\overline{p}}=\infty${\rm)}, then $H_{q,p}^{k}(C_{a,b}^h N)=0$.
\end{thm}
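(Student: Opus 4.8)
The plan is to combine the smooth-representative theorem (Theorem~\ref{sm-cohom}) with the global Sobolev--Poincar\'e inequality on the cylinder (Theorem~\ref{glob-sp-cyl}), after translating the hypotheses on the twisting function~$h$ into the weight hypotheses of that theorem. First I would fix a class in $H^k_{q,p}(C_{a,b}^h N)$. Since $\frac1p-\frac1q<\frac{q-1}{q(n+1)}\le\frac1{n+1}=\frac1{\dim(C_{a,b}^h N)}$, Theorem~\ref{sm-cohom} applies to the $(n+1)$-dimensional manifold $C_{a,b}^h N$, so the class is represented by a smooth closed form $\omega\in C^\infty L^p(C_{a,b}^h N,\Lambda^k)$; it suffices to show that such an $\omega$ is $dC^\infty\Omega_{q,p}^{k-1}$-exact, i.e.\ to produce $\xi\in C^\infty\Omega_{q,p}^{k-1}(C_{a,b}^h N)$ with $d\xi=\omega$.

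The key step is to pass from the twisted-cylinder metric to the product cylinder $M=[a,b)\times N$ with the product metric $dt^2+g_N$, keeping track of how the $L^p$- and $L^q$-norms change under this ``untwisting''. Writing $\omega=\omega_A+dt\wedge\omega_B$ as in Section~\ref{diff_twis}, formula~(\ref{eq:norm}) shows that $\|\omega\|_{L^p(C_{a,b}^h N,\Lambda^k)}$ is comparable, up to the pointwise weights $h^{\frac np-k}$ and $h^{\frac np-k+1}$, to the product-metric norm of the corresponding form. The functions $f_{k,p}$ and $F_{k,p}$ are exactly the pointwise-in-$t$ min and max over $N$ of these weights, so one gets, for a suitable $\gamma:[a,b)\to\mathbb{R}$ built from $\min(f_{k-1,p},f_{k,p})$ (bounding the $k$-form weights from below) and a suitable $\beta:[a,b)\to\mathbb{R}$ built from $\max(F_{k-2,q},F_{k-1,q})$ (bounding the $(k-1)$-form weights from above), the estimates
\[
\|\omega\|_{L^p(M,\Lambda^k,\gamma)}\le\const\,\|\omega\|_{L^p(C_{a,b}^h N,\Lambda^k)},
\qquad
\|\xi\|_{L^q(C_{a,b}^h N,\Lambda^{k-1})}\le\const\,\|\xi\|_{L^q(M,\Lambda^{k-1},\beta)}.
\]
Here the shift in the degree indices (a $(k-1)$-form on the cylinder has components that are $(k-1)$- and $(k-2)$-forms on~$N$, hence the appearance of $F_{k-2,q}$ and $F_{k-1,q}$) must be handled carefully. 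The three displayed hypotheses of the theorem are precisely $\|\beta\|_{L^q([a,b))}<\infty$, $\|t\beta(t)\|_{L^q([a,b))}<\infty$, and $\|\gamma^{-1}\|_{L^{p\overline p/(p-\overline p)}([a,b))}<\infty$, so Theorem~\ref{glob-sp-cyl} is applicable.

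With these translations in place the proof concludes quickly: since $H^k_{\mathrm{DR}}(N)=0$ and $H^*_{\mathrm{DR}}([a,b))$ is trivial in positive degrees, the smooth closed form $\omega$ on the product cylinder $M$ is exact in the ordinary (non-$L^p$) sense, so Theorem~\ref{glob-sp-cyl} gives a smooth $\xi\in C^\infty L^q(M,\Lambda^{k-1},\beta)$ with $d\xi=\omega$ and $\|\xi\|_{L^q(M,\Lambda^{k-1},\beta)}\le\const\,\|\omega\|_{L^p(M,\Lambda^k,\gamma)}$; transferring back to the twisted metric, $\xi\in C^\infty L^q(C_{a,b}^h N,\Lambda^{k-1})$ and $d\xi=\omega\in L^p$, so $\xi\in C^\infty\Omega_{q,p}^{k-1}(C_{a,b}^h N)$ and $\omega\in dC^\infty\Omega_{q,p}^{k-1}(C_{a,b}^h N)\subset B_{q,p}^k(C_{a,b}^h N)$. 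Hence every class in $H^k_{q,p}(C_{a,b}^h N)$ vanishes. The main obstacle I anticipate is the bookkeeping in the second paragraph: verifying that the pointwise weight comparisons between the twisted and product norms hold with the stated $\beta,\gamma$ for \emph{both} the $\omega_A$ and $\omega_B$ components in \emph{every} relevant degree, and checking that $\omega$ closed for the twisted metric is the same condition as $\omega$ closed for the product metric (it is, since $d$ does not see the metric), so that Theorem~\ref{glob-sp-cyl}'s exactness hypothesis is met.
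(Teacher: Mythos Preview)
Your proposal is correct and follows essentially the same approach as the paper: pass to the product cylinder $\overline{M}=[a,b)\times N$, set $\gamma=\min(f_{k-1,p},f_{k,p})$ and $\beta=\max(F_{k-2,q},F_{k-1,q})$, use~(\ref{eq:norm}) to sandwich the twisted norms by the weighted product norms exactly as you describe, invoke $H^k_{\mathrm{DR}}(\overline{M})=H^k_{\mathrm{DR}}(N)=0$ to get exactness, and apply Theorem~\ref{glob-sp-cyl} and Theorem~\ref{sm-cohom}. The only cosmetic difference is that the paper first proves $C^\infty H^k_{q,p}(C_{a,b}^h N)=0$ and then appeals to Theorem~\ref{sm-cohom} at the end, whereas you invoke Theorem~\ref{sm-cohom} at the start to pick a smooth representative; the content is identical.
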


\begin{proof}
Let $\overline{M}$ be the~cylinder~$[a,b)\times N$ with the~usual product metric.
By~the~K\"unneth formula for the~de~Rham cohomology, we have
$$
H^k_{\mathrm{DR}}(\overline{M})=H^k_{\mathrm{DR}}(N)=0.
$$

Using expression~(\ref{eq:norm}) for~the~norm and the~definition of~$f_{l,p}$, we infer
\begin{multline}\label{min-fp}
\|\omega\|_{L^{p}(\overline{M},\Lambda^{k},\min(f_{k-1,p},f_{k,p}))}
\\
=\left[\!\int_{a}^{b} \{\min(f_{k-1,p}(t),f_{k,p}(t))\}^p \int_{N}(|\omega_{A}(t,x)|_{N}^{2}
+ |\omega_{B}(t,x)|_{N}^{2}\bigr)^{\frac{p}{2}} 
dxdt\!\right]^{\frac{1}{p}}
\\
\leq  
\left[\!\int_{a}^{b}\int_{N}\bigl(h^{2(\frac{n}{p}-k)}(t,x) |\omega_{A}(t,x)|_{N}^{2}\!
+ h^{2(\frac{n}{p}-k+1)}(t,x) |\omega_{B}(t,x)|_{N}^{2}\bigr)^{\frac{p}{2}} 
dxdt\!\right]^{\frac{1}{p}}
\\
= \|\omega\|_{L^{p}(C_{a,b}^h N,\Lambda^{k})}.
\end{multline}

Thus, $\omega\in C^\infty L^{p}(\overline{M},\Lambda^{k},\min(f_{k-1,p},f_{k,p}))$.
Since the~de~Rham cohomology 
$H^k_{\mathrm{DR}}(\overline{M})$ is trivial, $\omega$ is exact, and we can apply 
Theorem~\ref{glob-sp-cyl}, by~which there exists
$\xi\in C^\infty L^{q}(\overline{M},\Lambda^{k},\max(F_{k-2,q}(t),F_{k-1,q}(t))$ with
\begin{equation}\label{est-norm-pq}
\|\xi\|_{L^q(\overline{M},\Lambda^{k-1},\max(F_{k-2,q},F_{k-1,q}))}  
\leq \mathrm{const} \|\omega\|_{L^{p}(\overline{M},\Lambda^{k},\min(f_{k-1,p},f_{k,p}))}.
\end{equation}
For this form~$\xi$, we have
\begin{multline}\label{max-Fq}
\|\xi\|_{L^q(C_{a,b}^h N,\Lambda^{k-1})} 
\\
=\left[\!\int_{a}^{b}\int_{N}\bigl(h^{2(\frac{n}{q}-k+1)}(t,x) |\xi_{A}(t,x)|_{N}^{2}\!
+ h^{2(\frac{n}{q}-k+2)}(t,x) |\xi_{B}(t,x)|_{N}^{2}\bigr)^{\frac{q}{2}} 
dxdt\!\right]^{\frac{1}{q}}
\\
\leq \left[\!\int_{a}^{b} \{\max(F_{k-2,q}(t),F_{k-1,q}(t))\}^q \int_{N} (|\xi_{A}(t,x)|_{N}^{2}
+ |\xi_{B}(t,x)|_{N}^{2}\bigr)^{\frac{q}{2}} dxdt\!\right]^{\frac{1}{q}}
\\
= \|\xi\|_{L^q(\overline{M},\Lambda^{k-1},\max(F_{k-2,q},F_{k-1,q}))}.  
\end{multline}
Combining~\eqref{min-fp},\eqref{est-norm-pq}, and \eqref{max-Fq}, we obtain
$$
\|\xi\|_{L^q(C_{a,b}^h N,\Lambda^{k-1})}  \le \const
\|\omega\|_{L^{p}(C_{a,b}^h N,\Lambda^{k})}.
$$
Thus, $C^\infty H_{q,p}^{k}(C_{a,b}^h N)=0$, and hence, by~Theorem~\ref{sm-cohom},
also $H_{q,p}^{k}(C_{a,b}^h N)=0$.
\end{proof}

\section{$L_{q,p}$-Cohomology of~an~Asymptotic Twisted Cylinder}
\label{cohom-asymp}

Recall the following definition, given in~\cite{GKop16}:

\begin{defn}
We refer to a pair $(M,X)$ consisting of an~$m$-dimensional manifold $M$ and
an~$m$-dimensional compact submanifold~$X$ with boundary as an {\it asymptotic twisted cylinder}
$AC_{a,b}^{h}\partial X$ if $M\setminus X$ is bi-Lipschitz diffeomorphically equivalent 
to the~twisted cylinder $C_{a,b}^{h}\partial X$. 
\end{defn}

For asymptotic twisted cylinders, Theorem~\ref{thm: main global} gives:

\begin{thm}\label{thm:app-abs} 
Let $(M,X)=AC_{a,b}^{h}\partial X$ be an~asymptotic twisted cylinder with 
$\dim M =\dim X = m = n+1$. Assume that $q\ge p\geq 1$, 
$\frac{1}{p}-\frac{1}{q}<\frac{q-1}{qm}$, and $H_{\mathrm{DR}}^{k}(X)=0$. If 
\[
\|\max(F_{k-2,q},F_{k-1,q})\|_{L^{q}([a,b))}<\infty, \quad
\| t\max(F_{k-2,q},F_{k-1,q})(t))\|_{L^{q}([a,b))}<\infty
\]
and
\[
\|\{\min(f_{k-1,p},f_{k,p})\}^{-1}\|_{L^{\frac{p\overline{p}}{p-\overline{p}}}([a,b))}
<\infty,
\]
for some $\overline{p}$, $1\le \overline{p}\le p$ {\rm(}for $\overline{p}=p$, we put 
$\frac{p\overline{p}}{p-\overline{p}}=\infty${\rm)}, then $H_{q,p}^{k}(M)=0$.
\end{thm}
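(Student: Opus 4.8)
The statement is an almost immediate consequence of Theorem~\ref{thm: main global} together with the excision/localization properties of $L_{q,p}$-cohomology. The idea is that outside the compact submanifold $X$ the manifold $M$ looks (bi-Lipschitzly) like the twisted cylinder $C_{a,b}^h\partial X$, on which Theorem~\ref{thm: main global} produces primitives with the required norm control; on the compact part $X$ (where $H^k_{\mathrm{DR}}(X)=0$) de~Rham theory with compactly supported corrections gives primitives trivially, since all $L^p$, $L^q$ norms over a compact set with bounded geometry are equivalent to the corresponding $L^2$ or $L^\infty$ norms. The work is to glue these two constructions together.

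\textbf{Steps.} First I would invoke Theorem~\ref{sm-cohom} to reduce to showing $C^\infty H_{q,p}^k(M)=0$: it suffices to prove that every smooth closed $k$-form $\omega\in C^\infty L^p(M,\Lambda^k)$ is the differential of a smooth $(k-1)$-form in $C^\infty\Omega_{q,p}^{k-1}(M)$. Second, fix the bi-Lipschitz diffeomorphism $\Phi:M\setminus X\to C_{a,b}^h\partial X$ and choose a slightly larger compact piece: let $X'$ be a compact submanifold with $X\subset \mathrm{int}\,X'$ and $M\setminus X'$ still diffeomorphic (via $\Phi$) to a twisted sub-cylinder $C_{a',b}^h\partial X$ for some $a'>a$; pick a smooth cutoff $\chi$ equal to $1$ on a neighbourhood of $M\setminus X'$ and supported in $M\setminus X$. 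Third, apply Theorem~\ref{thm: main global}, transported by $\Phi$, to $\omega$ restricted to the end $M\setminus \overline{X'}$: the hypotheses on $F_{k-i,q}$ and $f_{k-i,p}$ over $[a,b)$ (hence over $[a',b)$) give a smooth $(k-1)$-form $\eta$ on $M\setminus\overline{X'}$ with $d\eta=\omega$ there and
\[
\|\eta\|_{L^q(M\setminus\overline{X'},\Lambda^{k-1})}\le \const\,\|\omega\|_{L^p(M,\Lambda^k)}.
\]
Fourth, consider $\omega-d(\chi\eta)$ on all of $M$: this is a smooth closed $k$-form supported in the compact set $\overline{X'}$, hence defines a class in $H^k_{\mathrm{DR}}(X')\cong H^k_{\mathrm{DR}}(X)=0$ (the inclusion $X\hookrightarrow X'$ is a homotopy equivalence), so $\omega-d(\chi\eta)=d\zeta$ for a smooth $(k-1)$-form $\zeta$, which one may take supported in the compact set $\overline{X'}$ by the usual argument with a primitive of a compactly supported exact form (or simply because $\zeta$ need only be controlled on a compact set, where $L^q$ and $L^p$ norms are finite and comparable). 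Then $\xi:=\chi\eta+\zeta$ is smooth, $d\xi=\omega$, and
\[
\|\xi\|_{L^q(M,\Lambda^{k-1})}\le \|\chi\eta\|_{L^q(M)}+\|\zeta\|_{L^q(\overline{X'})}\le \const\,\|\omega\|_{L^p(M,\Lambda^k)}+\const\,\|\omega\|_{L^p(\overline{X'})}<\infty,
\]
so $\xi\in C^\infty\Omega_{q,p}^{k-1}(M)$ and $\omega\in dC^\infty\Omega_{q,p}^{k-1}(M)$. Since $\dim M=m=n+1$ and $\dim\partial X=n$, the constraint $\frac1p-\frac1q<\frac{q-1}{q m}$ matches the hypothesis $\frac1p-\frac1q<\frac{q-1}{q(n+1)}$ needed to apply Theorem~\ref{thm: main global} on the end; bi-Lipschitz invariance of all the $L^p$, $L^q$ norms (up to constants) handles the transport through $\Phi$.

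\textbf{Main obstacle.} The only genuinely delicate point is the bookkeeping in the gluing: one must ensure that $d(\chi\eta)$ is genuinely an $L^p$-form on all of $M$ (the term $d\chi\wedge\eta$ is supported in the compact collar $\mathrm{supp}\,d\chi$, where $\eta\in L^q\subset L^p$ after restriction, so this is fine) and that the primitive $\zeta$ of the compactly supported exact form $\omega-d(\chi\eta)$ can be chosen compactly supported — the standard way is to note that a compactly supported closed $k$-form representing $0$ in $H^k_{\mathrm{DR}}(X')$ is $d$ of a form which, after adding a closed form and using that $X'$ retracts to $X$, may be arranged to vanish near $\partial X'$; alternatively, if one does not insist on compact support for $\zeta$, one still only needs $\zeta\in L^q\cap(\text{$L^p$-differential})$ over the whole of $M$, and since $\omega-d(\chi\eta)$ vanishes outside $\overline{X'}$, choosing $\zeta$ to vanish outside a slightly larger compact set suffices and makes every norm finite. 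This is routine once the geometry of the collar is set up, so the substantive content is entirely carried by Theorem~\ref{thm: main global}.
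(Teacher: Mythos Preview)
The paper takes a much terser and genuinely different route: it simply asserts that bi-Lipschitz invariance together with ``extension by zero'' yields an isomorphism $H^*_{q,p}(M)\cong H^*_{q,p}(C_{a,b}^h\partial X)$ and then invokes Theorem~\ref{thm: main global} on the cylinder in one line. Your cutting-and-gluing argument is more explicit and closer to a complete proof, but as written it has a real gap and one underjustified step.

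The gap is in step~3. To apply Theorem~\ref{thm: main global} on the end $C_{a',b}^h\partial X$ you need its hypothesis $H^k_{\mathrm{DR}}(\partial X)=0$, and that is \emph{not} what is assumed: the stated hypothesis is $H^k_{\mathrm{DR}}(X)=0$, and neither implies the other (take $X$ an $(n{+}1)$-ball, $\partial X=S^n$, $k=n$). The repair is to use the assumption where it actually bites. Since the cylindrical end retracts onto $\partial X\subset X$, all of $M$ deformation-retracts onto $X$, so $H^k_{\mathrm{DR}}(M)=H^k_{\mathrm{DR}}(X)=0$ and $\omega$ is \emph{globally} exact. Its restriction to the end is then exact, and you may invoke Theorem~\ref{glob-sp-cyl} with $\beta=\max(F_{k-2,q},F_{k-1,q})$ and $\gamma=\min(f_{k-1,p},f_{k,p})$ exactly as in the proof of Theorem~\ref{thm: main global}; that theorem uses its cohomological hypothesis only to obtain exactness, which you now have for free.

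The underjustified step is in step~6. The claim that the primitive $\zeta$ of the compactly supported exact form $\psi=\omega-d(\chi\eta)$ ``may be taken to vanish outside a slightly larger compact set'' is not automatic: vanishing of $[\psi]$ in $H^k_{\mathrm{DR}}(X')$ does not give vanishing in compactly supported cohomology, so a compactly supported primitive need not exist. What does work is to pick $\zeta_0$ on a compact $X''\supset\overline{X'}$ with $d\zeta_0=\psi$, note that on the collar $\zeta_0$ is closed and hence $\zeta_0=\pi^*\theta+d\mu$ with $\theta$ closed on $\partial X$, and extend across the end by $\pi^*\theta+d(\rho\mu)$ for a cutoff $\rho$. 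The tail $\pi^*\theta$ is generally nonzero, but it lies in $L^q(C_{a,b}^h\partial X)$ precisely because $\|F_{k-1,q}\|_{L^q([a,b))}<\infty$; so the integrability hypotheses on the $F_{j,q}$ do double duty here, and the step is not as routine as you suggest.
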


\begin{proof}
Since bi-Lipschitz diffeomorphisms preserve $L_{p_1}$ and $L_{p_2}$ and extension
by zero gives a~topological isomorphism between the~spaces 
$W_{p_1,p_2}(C_{a,b}^h \partial X)$ and \linebreak
$W_{p_1,p_2}(M)$ for all~$p_1,p_2$, we have a~topological isomorphism 
$$
H^*_{p_1,p_2}(M)\cong H^*_{p_1,p_2}(C_{a,b}^h \partial X)
$$
for all~$p_1$, $p_2$. The theorem now follows from Theorem~\ref{thm: main global}.
\end{proof}

\section{Examples}\label{exam}

Let us analyze the~conditions of~the~last theorems for comparatively simple cases. 
Suppose that $N$ is the $n$-dimensional sphere $S^n$. Then $H^k_{\mathrm{DR}}(N)=0$ 
for any $k\neq n$. By the~hypothesess of~the~theorems, $q\ge p\geq 1$
 and $\frac{1}{p}-\frac{1}{q}<\frac{q-1}{q(n+1)}$. Put 
$$
s(t):=\max_{x\in S^n} h(t,x) 
\quad \text{and} \quad g(t):= \min_{x\in S^n} h(t,x).
$$
Then, by~definition,
$$
I_{1,q,k}:=\max(F_{k-2,q},F_{k-1,q})
=\max(s^{\frac{n}{q}-k+2},s^{\frac{n}{q}-k+1}),
$$
$$
I_{2,q,k}(t):= t \max(F_{k-2,q},F_{k-1,q})(t))=t\max(s^{\frac{n}{q}-k+2}(t),
s^{\frac{n}{q}-k+1}(t))
$$  
and 
$$
I_{3,p,k}:=\{\min(f_{k-1,p},f_{k,p})\}^{-1}
=\{\min(g^{\frac{n}{p}-k+1},g^{\frac{n}{p}-k})\}^{-1}.
$$

By~the~hypotheses of~the~theorems, we must check the~integrability of these three functions
in~the~corresponding degrees under the~above-mentioned restrictions on~$p$ and~$q$.

Suppose for simplicity that $s(t)$ and $g(t)$ are smooth increasing functions tending
to~$\infty$ as~$t\to b-0$. Denote the~maximal integrability intervals for~$s^u$ 
and $g^v$ by~$(-\infty, \alpha)$ and $(-\infty, \beta)$, i.e $s^u$ is integrable 
on~$[a,b)$ for every $u<\alpha$ and is not integrable for every $u>\alpha$ and similarly
for~$g^v$. Let also $\alpha_1$ be the~supremum of~$\mu$ such that 
$t s^{\mu}(t)$ is integrable on~$[a,b)$. 

For this case $I_{1,q,k}=s^{\frac{n}{q}-k+2}$, $I_{2,q,k}(t)=t s^{\frac{n}{q}-k+2}(t)$,
and $I_{3,p,k}=g^{k-\frac{n}{p}}$.

The~conditions of~the~theorems are fulfilled if 
$$
\frac{n}{q}-k+2<\min(\alpha, \alpha_1), \quad \frac{n}{p}-k>-\beta. 
$$

Note that these inequalities cannot hold simultaneously if $b=\infty$. 
In~this case, $\alpha$, $\alpha_1$, and~$\beta$ are all negative, whence
$\frac{n}{p}-k>\frac{n}{q}-k+2$. We thus have $\frac{1}{p}-\frac{1}{q}>\frac{2}{n}$,
which contradicts the~hypotheses.

Examine more closely the~case of~$0\leq a <b<\infty$. The~function~$t$ is bounded,
and hence $\alpha=\alpha_1$. Therefore, the~inequalities for $I_{1,q,k}$ and $I_{3,p,k}$
can be combined into one inequality 
$$
\frac{k-2+\alpha}{n}<\frac{1}{q}\le\frac{1}{p}<\frac{k-\beta}{n}.
$$
It means that the~additional condition  
$k-2+\alpha \leq k-\beta$, i.e., $\alpha+\beta \leq 2$,
must be fulfilled. 

The last condition is $\frac{1}{p}-\frac{1}{q}<\frac{q-1}{q(n+1)}$,
i.e., $p\le q<\frac{np}{n+1-p}$.

Summarizing, we conclude that for known integrability limits $\alpha$ and $\beta$,
we need to check two simple conditions for $p$ and~$q$:
$$
\alpha+\beta \leq 2, \, \, p<q<\frac{np}{n+1-p}
$$
and the inequality 
$$
\frac{k-2+\alpha}{n}<\frac{1}{q}\le\frac{1}{p}<\frac{k-\beta}{n}.
$$
for the degree~$k$.

Under these conditions, the~cohomology of~the~warped product~$C_{[a,b)}^f S^n$
vanishes.

For example, if $f(t)=g(t)=(b-t)^{-2}$ then $\alpha=\beta=1/2$. For $p=2$ we have
$2\le q<2 \frac{n}{n-1}$.

The last inequality yields
\begin{equation}\label{ineq-exam}
\frac{k-3/2}{n}<\frac{1}{q} \le \frac{1}{2} < \frac{k-1/2}{n}.
\end{equation} 
Let $q$ be an~arbitrary number in~$(2,\frac{8}{3})$. Then the~second inequality 
$q<\frac{2n}{n-1}$ gives us the~constraint $n<q/(q-2)$. Since $q/(q-2)<4$, we can take 
$n=4$. We have $1/2 < (2k-1)/8$, i.e., $k>2$. For $k=3$, the~leftmost inequality gives 
us the~fulfilled condition $3/8 < 1/q$. 
Note that if $q=2$ then always $q<\frac{2n}{n-1}$. If $n$ is even and $k=\frac{n}{2}+1$
then all inequalities in~(\ref{ineq-exam}) are fulfilled. Thus, we have 
$$
H_{q,2}^3 \bigl(C_{[a,b)}^h S^4\bigr)= 0  
\quad \text{if~} q\in \left[2,\frac{8}{3}\right)
$$
and
$$
H_{2,2}^{l+1}(C_{[a,b)}^h S^{2l}\bigr)=0, \quad l\ge 2.
$$

\end{document}